\newcommand*{\Cdot}{\raisebox{-0.4ex}{\scalebox{1.8}{$\hspace{.3mm}\cdot\hspace{.3mm}$}}}
\newcommand*{\mCdot}{\raisebox{-0.4ex}{\scalebox{1.8}{$\hspace{.3mm}\cdot\hspace{-.2mm}$}}}
\newcommand{\lsection}[2][""]{%
    \ifthenelse{\equal{#1}{""}}{%
        \section{#2}
    }{%
        \renewcommand{\sectionmark}[1]{\markright{\thesection.\ \MakeUppercase{#1}}}
        \section{#2}
        \renewcommand{\sectionmark}[1]{\markright{\thesection.\ \MakeUppercase{##1}}}
    }
}
\newcommand{\lchapter}[2][""]{%
    \ifthenelse{\equal{#1}{""}}{%
        \chapter{#2}
    }{%
        \renewcommand{\chaptermark}[1]{\markboth{\MakeUppercase{\chaptername\ \thechapter.\ #1}}{}}
        \chapter{#2}
        \renewcommand{\chaptermark}[1]{\markboth{\MakeUppercase{\chaptername\ \thechapter.\ ##1}}{}}
    }
}
\def\a{\alpha}
\def\l{\lambda}
\def\iso{isomorphism}
\def\homo{homomorphism}
\def\ra{\rightarrow}
\def\mfd{manifold}
\def\wrt{with respect to}
\def\nbd{neighborhood}
\def\hra{\hookrightarrow}
\def\lra{\longrightarrow}
\def\irr{irreducible}
\def\U{{\cal U}}
\def\W{{\cal W}}
\def\ssm{\hspace{-.5mm}\smallsetminus\hspace{-.5mm}}
\def\o{\omega}
\def\vL{\varLambda}
\def\vP{\varPsi}
\def\vG{\varGamma}
\def\vD{\varDelta}
\def\vv {\vskip.2cm}
\def\st{such that}
\def\op{\operatorname}
\def\C{{\mathbb C}}
\def\R{{\mathbb R}}
\def\Z{{\mathbb Z}}
\def\Q{{\mathbb Q}}
\def\bo1{{\text{\bold 1}}}
\def\r{respectively}
\def\Randmarke#1{\vadjust{\vbox to 0pt
                {\vss \hbox to \hsize{\hskip\hsize
                                     \quad #1\hss}\vskip3.5pt}}}
\def\Randpfeilmarke#1{\Randmarke{$\scriptscriptstyle\Leftarrow$#1}}
\def\bi#1 { $^{\bf!}$ {\{\sl #1 \}} \Randpfeilmarke{\bf !!}}
\newtheorem{theorem}{Theorem}[section]
\newtheorem{corollary}[theorem]{Corollary}
\newtheorem{proposition}[theorem]{Proposition}
\newtheorem{exa}[theorem]{Example}
\newtheorem{exas}[theorem]{Examples}
\newtheorem{prope}[theorem]{Property}
\newtheorem{defini}[theorem]{Definition}
\newenvironment{definition}{\begin{defini} \em}{\end{defini}}
\newtheorem{rema}[theorem]{Remark}
\newenvironment{remark}{\begin{rema} \em}{\end{rema}}
\newenvironment{equationth}{\stepcounter{theorem}\begin{equation}}{\end{equation}}
\newenvironment{proof}{{\noindent \sc Proof: } }{\mbox{ }\hfill$\Box$
                        \vspace{1.5ex} \par}
\def\hfleche#1#2{\smash{\mathop{\vbox{\hbox to 8.5mm{{#1}}}}\limits^{#2}}}
\title{Local and global coincidence homology classes}
\author{Jean-Paul Brasselet\thanks{Partially supported by the FAPESP grant no. 2015/06697-9.}
and Tatsuo Suwa\thanks{Partially supported by the JSPS grants no. 24540060 and no. 16K05116.}}
\date{}
\begin{document}

\pagestyle{plain}

\maketitle \thispagestyle{empty}

{\sl 
\hfill Comme c'est curieux ! 

\hfill Comme c'est bizarre ! 

\hfill et {\bf  quelle co\"\i ncidence} ! 

\hfill J'ai pris le m\^eme train, Monsieur, moi aussi !
}

\hfill {\small Eug\`ene Ionesco}

\hfill {\small La cantatrice chauve.}

\bigskip

\maketitle

\noindent
{\bf Abstract}
\smallskip

For two differentiable maps between two manifolds of possibly different dimensions, the local and
global coincidence homology classes are introduced and studied by Bisi-Bracci-Izawa-Suwa  (2016) in the framework of \v{C}ech-de~Rham cohomology. We take up the problem from the combinatorial viewpoint and give some finer results, in particular for the
local classes. As to the global class, we clarify the relation with  the cohomology coincidence class as studied by Biasi-Libardi-Monis (2015).
In fact they  introduced such a class   in the context of several maps and we also consider this case. In particular we define the local  homology class and
give some explicit expressions. These all together lead to a generalization of the classical Lefschetz coincidence
point formula.

\bigskip

\noindent
{\it Keywords}\,: Alexander duality; Thom class; intersection product with  map; coincidence homology
class; Lefschetz coincidence point formula.

\vv

\noindent
{\it Mathematics Subject Classification} (2010)\,: 14C17, 37C25, 54H25, 55M05, 55M20,
57R20, 58C30.

\begin{section}{Introduction}

After Poincar\'e \cite{Po} and Brouwer \cite{Bro},  the Lefschetz fixed point theorem provided a new insight on the fixed point theory.
S. Lefschetz \cite {L3} proved that, given a self map $f$ of a compact oriented  manifold $M$, the sum of indices at fixed points 
is equal to the alternating sum of the matrix  traces of the linear maps induced by $f$ on the homology of $M$ 
with rational coefficients. 

In fact, Lefschetz \cite{L1} provided the result in the context of coincidences. Given two maps $f$ and $g$ between compact oriented manifolds $M$ and $N$ of the same dimension, 
the coincidence points are defined as points $x\in M$ such that $f(x)=g(x)$. At these points,
one defines the coincidence index and the Lefschetz result expresses the sum of indices in terms of 
alternating sum of suitable matrix  traces (cf. Theorem \ref{lefcpf} below). The Lefschetz fixed point formula is just the case 
$M=N$ and $g$ is the identity map $1_M$ on $M$.

The result has been generalized in two ways.
The first one is the case of manifolds with different dimensions. In fact it seems that Lefschetz himself possibly considered this case (cf. p.\,28 in \cite{St}).
There have been a number of literatures on this  (e.g., \cite{S} and references therein).
In \cite{BBIS}, the global and local homology classes are introduced
and some explicit formulas are given.
The second one is  the case of multi-coincidence. In  \cite{BLM}, the Lefschetz class for several maps is introduced and studied.

In this paper, we will recall the main definitions and results concerning the Lefschetz coincidence indices and classes
in the case of manifolds with same and possibly different dimensions. We study the global 
and local classes of \cite{BBIS} from combinatorial viewpoint and give some finer results. We also study the situation of multicoincidence, providing an alternative definition for the global class in homology. We further define  local classes
and give some explicit formulas. 

The paper is organized as follows. In Section 2 we  review, from the combinatorial viewpoint,  the Poincar\'e and Alexander dualities, the Thom class
and localized intersection products. After a brief description of the coincidence problem in Section 3, we recall in Section 4,
the original Lefschetz coincidence point formula and sketch an outline of the proof. One of the key ingredients is that the local index is given as
the local mapping degree of the difference of the maps. 

We then take up the case of two maps between
\mfd s of possibly different dimensions in Section 5. We recall the definitions of the global and local  coincidence classes 
(Definitions \ref{gch} and \ref{lch}) as well as a general coincidence point theorem
(Theorem \ref{thlefgen}). For the global class we prove that it correspnds
to the cohomology coincidence class of \cite{BLM} via the Poincar\'e duality (Theorem \ref{thcoincoin}). The local class
is representen by a cycle of the form $\sum c_{\bm s}{\bm s}$, where ${\bm s}$ runs through the simplices of an appropriate
dimension in the coincidence set. Thus once we know the coefficient $c_{\bm s}$ for each simplex ${\bm s}$, we have
an explicit expression for the local class. This can be done with the aid of  the representation of the Thom 
 in the \v{C}ech-de~Rham cohomology. A general formula is given in Theorem \ref{classloccdr} and, in some specific 
 cases, it leads to the formulas as in Theorems \ref{propisolated}, \ref{pm} and Corollary \ref{corloccoin}.
 
 Finally we consider the case of several maps in Section 6. Since this case can be reduced to the case of two maps,
 everything done for two maps may be applied to obtain the results for this case (Theorems \ref{thcoinmult}, \ref{locmult}
 and Corollary  \ref{corloccoingen}).
\end{section}

\begin{section}{Basic tools of algebraic topology }\label{secbasics}

 In the sequel, the homology is that of locally finite chains 
and the cohomology is that of cochains on finite
chains, both with integral coefficients, unless otherwise stated.
Also for a cycle $C$, its class in the homology of the ambient space is denoted by $[C]$, while the class in the homology of its support is simply denoted by $C$.

\subsection{Poincar\'e and Alexander dualities}

Combinatorial definitions of the dualities presented here can be found 
in \cite{Br}, see also  \cite{Su1} and \cite{Su2}.

Let $M$ be a connected  oriented $C^\infty$ manifold of dimension $m$. We take a triangulation $K_0$ of $M$
and let $K$ be the barycentric subdivision of $K_0$.  We further take the barycentric subdivision $K'$ of $K$. 
We take the second barycentric subdivision so that the star $S_{K'}(S)$ of a subcomplex $S$ 
\wrt\ 
$K'$, i.e., the union of simplices of $K'$ intersecting with $S$,  proper deformation
retracts to $S$.
Let $K^*$ denote
 the  cell decomposition dual to $K$. Thus for a simplex $\bm{s}$ of $K$ its dual cell $\bm{s}^{*}$  is the union of 
 simplices of $K'$ that intersect with $\bm{s}$ only at the barycenter $b_{\bm{s}}$ of $\bm{s}$. We orient the simplices and cells so that, if $\bm{s}$ is an $(m-p)$-simplex of $K$ and $\bm{s}^*$ its dual
$p$-cell, the orientation of $\bm{s}^*$ followed by that of $\bm{s}$ gives the orientation of $M$. This orientation convention is that of \cite{Su2}.
\paragraph{Poincar\'e duality\,:} 

We  denote by $C^{p}_{K^{*}}(M)$ and $C^{K}_{m-p}(M)$ the groups of $p$-cochains in $K^{*}$ and $(m-p)$-chains
in $K$ and define
\begin{equationth}\label{poinhomo}
P:C^{p}_{K^{*}}(M)\lra C^{K}_{m-p}(M)\qquad\text{by}\ \ 
u\mapsto\sum_{\bm{s}}\langle \bm{s}^*,u\rangle \bm{s},
\end{equationth}
where $\langle\ ,\ \rangle$ denotes the Kronecker paring and the sum is taken over all $(m-p)$-simplices $\bm{s}$ of $K$.
Then it induces the Poincar\'e duality isomorphism
\[
P_M : H^{p}(M) \stackrel{\sim}\lra H_{m-p}(M).
\]
It is shown that $P_{M}$ is given by the  left cap product with the fundamental class of $M$, the class of the sum of all $m$-simplices in $M$. Thus a class $c$ in $H^{p}(M)$ corresponds to the class $\gamma$ in $H_{m-p}(M)$ \st
\begin{equationth}\label{pdual}
\langle M, c\smallsmile a\rangle=\langle \gamma,a\rangle\qquad\text{for all} \ a\in  H^{m-p}(M).
\end{equationth}

In the sequel $P_M$ will simply be denoted by $P$ if there is no fear of confusion.

\begin{remark}\label{remcap} In some literature such as \cite{V}, the  Poincar\'e duality \iso\ is defined so that it is given by the right
cap product with the fundamental class. If we denote this \iso\ by $P_{M}'$, we have
\[
P_{M}'=(-1)^{p(m-p)}P_{M}.
\]
\end{remark}

\paragraph{Alexander duality\,:}

Let $S$ be a $K_0$-subcomplex of $M$. We denote by $S_{K'}(S)$ and $O_{K'}(S)$ the star and the open star of $S$ in $K'$ and set
\[
C^{p}_{K^{*}}(M,M\ssm O_{K'}(S))=\{\,u\in C^{p}_{K^{*}}(M)\mid\langle\bm{s}^{*},u\rangle=0\ \text{for}\ \bm{s}\not\subset S\,\}
\]
Then (\ref{poinhomo}) induces
\[
C^{p}_{K^{*}}(M,M\ssm O_{K'}(S))\lra C^{K}_{m-p}(S)
\]
and the Alexander isomorphism 
\[
A_{M,S} : H^{p}(M, M \ssm S) \stackrel{\sim}\lra H_{m-p} (S).
\]
In the sequel $A_{M,S}$ is denoted by $A$ if there is no fear of confusion.
We have  the following commutative diagram:
\begin{equationth}\label{PoinAlex}
\SelectTips{cm}{}
\xymatrix 
{H^{p}(M, M \ssm S) \ar[r]^-{j^{*}} \ar[d]^{A}_-{\wr}  &  H^{p}(M)\ar[d]^{P}_-{\wr}\\
H_{m-p}( S) \ar[r]^-{i_{*}} & H_{m-p} (M),
}
\end{equationth}
where $i: S \to M$ and $j : (M, \emptyset) \to (M,S)$ are inclusions. 

\paragraph{Thom \homo\,:} Let $X$ be an oriented pseudo-manifold of dimension $d$  in $M$ (cf. Definition \ref{defpmfd} below). For a $(d-p)$-simplex
$\bm{s}$ of $K$, $\bm{s}^{*}\cap X$ is a $p$-chain of $K'$. The \homo
\[
P:C^{p}_{K'}(X)\lra C^{K}_{d-p}(X)\qquad\text{ given by}\ \ 
u\mapsto\sum_{\bm{s}}\langle \bm{s}^*\cap X,u\rangle \bm{s},
\]
 induces the Poincar\'e \homo
\[
P_X : H^{p}(X) \lra H_{d-p}(X).
\]
It is also given by the cap product with the fundamental class of $X$.

Setting $k=m-d$, the Thom \homo
\[
T_{M,X}:H^{p}(X)\lra H^{p+k}(M,M\ssm X)
\]
is induced from the \homo
\[
T:C^{p}_{K'}(X)\lra C^{p+k}_{K^{*}}(M,M\ssm O_{K'}(X))\qquad\text{ given by}\ \ 
\langle \bm{s}^*,T(u)\rangle=\langle \bm{s}^*\cap X,u\rangle.
\]

Extending (\ref{PoinAlex}) for $S=X$, we have  
the following commutative pentahedron (square pyramid) where $P_X$
and  $T_{M,X}$ are isomorphisms if $X$ is a manifold. 
The map $i_! $ is the Gysin map
and the map $a$ is defined as $a=P_M \circ j^* =i_* \circ A_{M,X}$. 

\begin{equationth} \label{pentahedron}
 \SelectTips{cm}{}
\xymatrix @C=0.5cm 
{ 
&&&H^{p+k}(M, M \ssm X)  \ar[llldd]_-{j^*} \ar@{-->}[rdd]_{a} \ar[rrrddd]^{A_{M,X\;  \simeq}}\\
&&&&&\\
H^{p+k} (M)  \ar@{-->}[rrrr]_-{P_M \; \simeq}  &&&& H_{d-p}( M)  &\\
&H^{p}(X)  \ar[rrrrr]_-{P_X  \; (\simeq)} \ar[lu]^{i_!}   \ar[rruuu]^-{T_{M,X} \; (\simeq)}&&&&&H_{d-p}( X).  \ar@{-->}[llu]^{i_*}&\\}
\end{equationth}

\begin{definition} The {\em Thom class} $\vP_{X}$ of $X$ is a class in $H^{k}(M, M \ssm X) $ defined as
\[
\vP_{X}=T_{M,X}([1]),\qquad [1]\in H^{0}(X).
\]
\end{definition}

Thus it may also be written as 
\begin{equationth} \label{thomalex}
\vP_{X}=(A_{M,X})^{-1}X.
\end{equationth}

\begin{remark}\label{remthom} If we use the convention that the duality \homo s are given by the right cap product (cf. Remark \ref{remcap}), then the Thom class $\vP_{X}'$ in this convention is given by
\[
\vP_{X}'=(-1)^{d(m-d)}\vP_{X}.
\]
\end{remark}

\subsection{Intersection product and localized intersection product}

Let $M$ be a connected  oriented $C^\infty$ manifold of dimension $m$, as before. For two classes $a\in H_{r}(M)$ and $b\in H_{s}(M)$, 
the {\em intersection product} $a\Cdot b$ is 
defined by
\begin{equationth}\label{defint}
a\Cdot b=P(P^{-1}a\smallsmile P^{-1}b)\qquad\text{in}\ \  H_{r+s-m}(M),
\end{equationth}
where $\smallsmile$ denotes the cup product. Then $a\Cdot b$ is additive in $a$ and $b$ and we have
\begin{equationth}\label{anticomm}
b\Cdot a=(-1)^{(m-r)(m-s)}a\Cdot b.
\end{equationth}

Suppose $M$ is compact.  In this case, if $r+s=m$, then $a\Cdot b$ is in $H_{0}(M)=\Z$ and may be thought of as an integer.

\begin{remark} The above intersection product remains the same if we use $P_{M}'$ instead of $P_{M}$.
\end{remark}

Let $S_1$ and $S_2$ be two $K_0$-subcomplexes of $M$ and set $S=S_1\cap S_2$. Let $A_1$, $A_2$ and $A$ denote the Alexander \iso s for $(M,S_1)$, $(M,S_2)$ and $(M,S)$, \r.
For two classes $a\in  H_{r}(S_1)$ and $b\in  H_{s}(S_2)$, the {\em  intersection
product $(a\Cdot b)_S$ localized at $S$} is defined by
\begin{equationth}\label{locint}
(a\Cdot b)_S=A(A_1^{-1}a\smallsmile A_2^{-1}b)\qquad\text{in}\ \  H_{r+s-m}(S),
\end{equationth}
where $\smallsmile$ denotes the cup product
\[
H^{m-r}(M,M\ssm S_1)\times H^{m-s}(M,M\ssm S_2)\stackrel\smallsmile\lra H^{2m-r-s}(M,M\ssm S).
\]
Then $(a\Cdot b)_S$ is additive in $a$ and $b$ and satisfies a relation similar to (\ref{anticomm}). Letting $i_1:S_1\hra M$, $i_2:S_2\hra M$ and $i:S\hra M$ be the inclusions, from 
(\ref{PoinAlex}) we see that the definitions (\ref{defint}) and (\ref{locint}) are consistent in the sense that
\[
(i_1)_*a\Cdot (i_2)_*b=i_*(a\Cdot b)_S.
\]

In the above situation, suppose $S$  has a finite number of connected components $(S_{\l})$. Then
$H_{r+s-m}(S)=\bigoplus_{\l} H_{r+s-m}(S_{\l})$  and we have the intersection product $(a\Cdot b)_{S_{\l}}$ in
$H_{r+s-m}(S_{\l})$ for each $\l$. We have the ``localization formula''
\[
(i_1)_*a\Cdot (i_2)_*b=\sum_{\l}(i_{\l})_{*}(a\Cdot b)_{S_{\l}}\qquad\text{in}\ \ H_{r+s-m}(M),
\]
where $i_{\l}:S_{\l}\hra M$ denotes the inclusion. It is a formula of the same nature as the one given in Theorem
\ref{thresidue} below.
Suppose $S$ is compact. Then, if $r+s=m$, $H_{0}(S_{\l})=\Z$ for each $\l$ and $(a\Cdot b)_{S_{\l}}$
is an integer. In this case, the above formula is written as
\begin{equationth}\label{localizintno}
(i_1)_*a\Cdot (i_2)_*b=\sum_{\l}(a\Cdot b)_{S_{\l}}.
\end{equationth}

The above dualities and intersection products may also be defined in the framework of 
\v{C}ech-de~Rham cohomology (cf. \cite{BBIS}, \cite{Su}, \cite{Su2}).

\end{section}

\begin{section}{Coincidence problem}

Let $M$  and $N$ denote    connected  oriented $C^\infty$  manifolds of  dimensions $m$ and $n$, \r. 
Let us consider two $C^\infty$ maps $f: M \to N$ and $g:M \to N$.

\begin{definition}
The coincidence set ${\rm Coin}(f,g)$ is defined as
\[ 
{\rm Coin}(f,g) = \{\, x\in M\mid f(x)=g(x)\, \}.
\]
\end{definition}

In the subsequent sections we introduce global and local invariants for the pair $(f,g)$, study the relation among them and try to express the invariants explicitly. We then generalize the results to the case of several maps.

We denote by $\vG_{f}$ and $\vG_{g}$ the graphs of $f$ and $g$
in $M\times N$, \r. They are both $m$-dimensional sub\mfd s of $M\times N$. We orient $\vG_{f}$ so that the map $\tilde f:M\ra \vG_{f}\subset M\times N$ given by $\tilde f(x)=(x,f(x))$ is an orientation preserving diffeomorphism, and similarly for $\vG_{g}$.
\end{section}

\begin{section}{Lefschetz coincidence point formula - case of compact manifolds of the same dimension}\label{secsamedim}
 
In this section we review the original Lefschetz coincidence point formula in the case of $C^\infty$ maps
between manifolds of the same dimension,
which will be generalized in various settings in the subsequent sections. 

Let $M$  and $N$ denote  compact  connected  oriented $C^\infty$  manifolds of the same dimension $m$. 
Let us consider two maps $f: M \to N$ and $g:M \to N$.

\paragraph{Lefschetz  number\,:}  In this paragraph, we consider homology and cohomology with $\Q$
coefficients and denote by $f_{p}$ and $f^{p}$  the \homo s induced by $f$ on the $p$-th homology and $p$-th cohomology,
\r, and similarly for $g_{p}$ and $g^{p}$.
For each $p$, we have the commutative  diagram\,:
\[
\SelectTips{cm}{}
\xymatrix 
{ 
H^{p}(M, \Q)   \ar[r]_-{P_{M}}^-{\sim}& H_{m-p}(M, \Q)\ar[r]^-{\sim}_-{K_{M}}  \ar[d]^-{f_{m-p}}& H^{m-p}(M,\Q)^{*}
\ar[d]^{(f^{m-p})^{*}}\\
H^{p}(N, \Q)\ar[u]^{f^{p}} \ar[r]_-{P_{N}}^-{\sim} &  H_{m-p}(N, \Q)\ar[r]^-{\sim}_-{K_{N}}  &  H^{m-p}(N,\Q)^{*},
 }
 \]
where $K_{M}$ and $K_{N}$ denote  the \iso s  induced by the Kronecker pairing. 
Considering a similar diagram for $g$,
we set
\[
\tilde g^{p}=P_{N}^{-1}\circ g_{m-p}\circ P_{M}:H^{p}(M,\Q)\lra H^{p}(N,\Q).
\]

\begin{definition}The {\em  Lefschetz  number} $\op{Lef}(f,g)$ of the pair $(f,g)$ is  defined by
\[
\op{Lef}(f,g)=\sum_{p=0}^m (-1)^p \cdot \op{tr} (f^{p} \circ \tilde g^{p}).
\]
\end{definition}

We give a proof of the following for the sake of completeness\,:
\begin{proposition} We have
\[
\op{Lef}(g,f)=(-1)^{m}\op{Lef}(f,g).
\]
\end{proposition}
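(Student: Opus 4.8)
The plan is to prove the identity by a direct linear-algebra computation from the definition of $\op{Lef}$, exploiting the self-duality of $M$ and $N$ furnished by the Poincar\'e pairing $\langle M,\ \cdot\smallsmile\cdot\ \rangle$ on $H^{*}(M,\Q)$ (and likewise on $N$). The engine of the argument will be an adjointness property of the ``transfer'' map $\tilde g^{p}=P_{N}^{-1}\circ g_{m-p}\circ P_{M}$. First I would record that, for $\alpha\in H^{p}(M,\Q)$ and $b\in H^{m-p}(N,\Q)$,
\[
\langle N,\ \tilde g^{p}\alpha\smallsmile b\rangle=\langle M,\ \alpha\smallsmile g^{m-p}b\rangle .
\]
This follows by unwinding $\tilde g^{p}$: the characterization (\ref{pdual}) of $P$ turns the left-hand cup product into the Kronecker pairing $\langle g_{m-p}P_{M}\alpha,\ b\rangle$; the adjointness of $g_{m-p}$ and $g^{m-p}$ (the right square of the commutative diagram preceding the definition of $\op{Lef}$) moves $g$ across to give $\langle P_{M}\alpha,\ g^{m-p}b\rangle$; and (\ref{pdual}) again rewrites this as $\langle M,\ \alpha\smallsmile g^{m-p}b\rangle$. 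Thus $\tilde g^{p}$ is the Poincar\'e adjoint of $g^{m-p}$.

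Next I would put the summand $\op{tr}(f^{p}\circ\tilde g^{p})$ into a symmetric form. Using $\op{tr}(f^{p}\tilde g^{p})=\op{tr}(\tilde g^{p}f^{p})$ and evaluating the trace of the endomorphism $\tilde g^{p}f^{p}$ of $H^{p}(N,\Q)$ against a pair of bases $\{u_{j}\}\subset H^{p}(N,\Q)$, $\{v_{j}\}\subset H^{m-p}(N,\Q)$ dual for the Poincar\'e pairing on $N$ (so that $\op{tr}(A)=\sum_{j}\langle N,\ Au_{j}\smallsmile v_{j}\rangle$), the adjointness relation yields
\[
\op{tr}(f^{p}\circ\tilde g^{p})=\sum_{j}\langle M,\ f^{p}u_{j}\smallsmile g^{m-p}v_{j}\rangle .
\]
Running the identical computation for $\op{Lef}(g,f)$ with $f$ and $g$ interchanged, and setting the degree equal to $m-p$, expresses $\op{tr}(g^{m-p}\circ\tilde f^{m-p})$ as a sum $\sum_{j}\langle M,\ g^{m-p}u'_{j}\smallsmile f^{p}v'_{j}\rangle$ over Poincar\'e-dual bases of $N$ in the complementary degrees.

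The final step is to match these two sums. Graded commutativity of the cup product on $N$ shows that if $\{u_{j}\},\{v_{j}\}$ are dual then one may take $u'_{j}=v_{j}$ and $v'_{j}=(-1)^{p(m-p)}u_{j}$, again a dual pair in the swapped degrees; a second application of graded commutativity, this time on $M$, then cancels both Koszul signs and gives the key equality $\op{tr}(g^{m-p}\circ\tilde f^{m-p})=\op{tr}(f^{p}\circ\tilde g^{p})$ for every $p$. Re-indexing the alternating sum by $q=m-p$ in $\op{Lef}(g,f)=\sum_{q}(-1)^{q}\op{tr}(g^{q}\circ\tilde f^{q})$ replaces $(-1)^{q}$ by $(-1)^{m-p}=(-1)^{m}(-1)^{p}$ and produces the overall factor $(-1)^{m}$, finishing the proof.

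I expect the main obstacle to be pure sign bookkeeping: keeping the two occurrences of the Koszul sign $(-1)^{p(m-p)}$ consistent, and verifying that the swapped family is genuinely a dual basis, so that no stray sign survives apart from the global $(-1)^{m}$ coming from the re-indexing. A more geometric alternative would identify $\op{Lef}(f,g)$ with the intersection number $[\vG_{f}]\Cdot[\vG_{g}]$ in $M\times N$ and read off the sign from the anticommutativity (\ref{anticomm}), which for two $m$-cycles in the $2m$-dimensional manifold $M\times N$ is $(-1)^{m^{2}}=(-1)^{m}$; but the linear-algebra route above is self-contained at this point in the paper.
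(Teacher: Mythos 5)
Your argument is correct and is essentially the paper's own: both proofs reduce the statement to the single trace identity $\op{tr}(g^{m-p}\circ\tilde f^{m-p})=\op{tr}(f^{p}\circ\tilde g^{p})$, obtained from the perfectness and graded symmetry of the Poincar\'e pairing, and then re-index the alternating sum by $q=m-p$ to extract the factor $(-1)^{m}$. The only cosmetic difference is that you realize the duality through explicit dual bases and the adjunction $\langle N,\tilde g^{p}\alpha\smallsmile b\rangle=\langle M,\alpha\smallsmile g^{m-p}b\rangle$, whereas the paper phrases the same computation via the transpose of $D_{M}=K_{M}\circ P_{M}$ and the identity $\op{tr}(A)=\op{tr}(A^{*})$; your version has the minor merit of making the two cancelling Koszul signs $(-1)^{p(m-p)}$ explicit.
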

\begin{proof} 
First note that if we set
\[
D_{M}^{p}=(K_{M}\circ P_{M})^{p}:H^{p}(M,\Q)\overset\sim\lra H^{m-p}(M,\Q)^{*},
\]
then its transpose (dual map) is equal to $D_{M}^{m-p}$, i.e., $(D_{M}^{p})^{*}=D_{M}^{m-p}$. We compute,
omitting the superscripts for $D$,
\[
\begin{aligned}
(g^{p}\circ\tilde f^{p})^{*}&=(g^{p}\circ (D_{N}^{-1}) \circ (f^{m-p})^{*}\circ D_{M})^{*}=
D_{M}\circ f^{m-p}\circ  (D_{N}^{-1})\circ (g^{p})^{*}\\
&=D_{M}\circ f^{m-p}\circ  (D_{N}^{-1})\circ (g^{p})^{*}\circ D_{M}\circ D_{M}^{-1}
=D_{M}\circ f^{m-p}\circ  \tilde g^{m-p}\circ D_{M}^{-1}.
\end{aligned}
\]
Thus
\[
\op{tr}(g^{p}\circ\tilde f^{p})=\op{tr}(f^{m-p}\circ\tilde g^{m-p}),
\]
which proves the proposition.
\end{proof}

\begin{remark} {\bf 1.} If we set 
\[
\tilde g_{p}=P_{M}\circ g^{m-p}\circ P_{N}^{-1}:H_{p}(N,\Q)\lra H_{p}(M,\Q),
\]
then we may write 
\[
\op{Lef}(f,g)=\sum_{p=0}^m (-1)^p \cdot \op{tr} (\tilde g_{p} \circ f_{p} ).
\]
\vv

\noindent
{\bf 2.} If we use $P_{M}'$ and $P_{N}'$ (cf. Remark \ref{remcap}) the above remains the same.
\end{remark}
    
\paragraph{Local mapping degree\,:} Let $f,g:M\ra N$ be as above.
Suppose $p$ is an isolated point in $\op{Coin}(f,g)$.
Let $U$ be a coordinate \nbd\ around $p$ with  
coordinates $x=(x_1,\dots,x_m)$ in $M$ and $V$ a coordinate \nbd\ around $f(p)=g(p)$
in $N$. Also let $B$ be a closed ball around $p$ in $U$ \st\ $f(B)\subset V$ and $g(B)\subset V$.
Thus we may consider the map $g-f:B\ra\R^m$ whose image is the origin $0$ in $\R^m$ only at $p$. The boundary $\partial B$
is homeomorphic to the unit sphere $S^{m-1}$ and we have the map
\[
\gamma:\partial B\lra S^{m-1}\qquad\text{defined by}\ \ \gamma(x)=\frac{g(x)-f(x)}{\Vert g(x)-f(x)\Vert}.
\]
We denote the degree of this map by $\op{deg}(g-f,p)$ and call it the {\em coincidence index} of $(f,g)$ at $p$.

An isolated coincidence point $p$ of the pair $(f,g)$ is said to be {\em non-degenerate} if
\[
\det(J_g(p)-J_f(p))\ne 0,
\]
where  $J_f(p)$ and 
$J_g(p)$ denote the Jacobian matrices of $f$ and $g$ at $p$. 
 If $p$ is a non-degenerate coincidence point, then we have
\[
\op{deg}(g-f,p)=\op{sgn}\, \det(J_g(p)-J_f(p)).
\]

With these, we have\,:
\begin{theorem}[Lefschetz coincidence point formula] Let $M$ and $N$ be compact connected oriented $C^{\infty}$ 
manifolds of the same dimension
and let $f , g \colon M \ra N$ be $C^\infty$ maps.
Suppose $\op{Coin}(f,g)$ consists   of a finite number of isolated points. Then
\[
\op{Lef}(f,g)=\sum_{p \in \op{Coin}(f,g) }\deg(g-f,p).
\]
In particular, $\op{Lef}(f,g)$ is an integer, which is zero if $\op{Coin}(f,g)=\emptyset$.
\label{lefcpf}
\end{theorem}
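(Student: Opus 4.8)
The plan is to realize both the global number $\op{Lef}(f,g)$ and the local indices $\op{deg}(g-f,p)$ as two faces of a single quantity: the intersection number of the graphs $\vG_{f}$ and $\vG_{g}$ inside the $2m$-dimensional manifold $M\times N$. Both graphs are compact, oriented $m$-dimensional submanifolds, so $[\vG_{f}]\Cdot[\vG_{g}]$ lies in $H_{0}(M\times N)=\Z$ and is an integer. A point $(x,y)$ belongs to $\vG_{f}\cap\vG_{g}$ precisely when $y=f(x)=g(x)$, so the first projection identifies $\vG_{f}\cap\vG_{g}$ with $\op{Coin}(f,g)$; under the hypothesis this is the finite set $\{(p,f(p)):p\in\op{Coin}(f,g)\}$. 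The strategy is to evaluate $[\vG_{f}]\Cdot[\vG_{g}]$ globally (getting $\op{Lef}(f,g)$) and locally (getting the indices), and to match them through the localization formula (\ref{localizintno}).

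For the global identity $[\vG_{f}]\Cdot[\vG_{g}]=\op{Lef}(f,g)$, write $\eta_{f}=P^{-1}[\vG_{f}]$ and $\eta_{g}=P^{-1}[\vG_{g}]$ in $H^{m}(M\times N)$; by the definition (\ref{defint}) the integer $[\vG_{f}]\Cdot[\vG_{g}]$ equals $\langle[M\times N],\eta_{f}\smallsmile\eta_{g}\rangle$. Since $\vG_{g}$ is the image of the orientation-preserving embedding $\tilde g(x)=(x,g(x))$, one has $\langle[\vG_{g}],\omega\rangle=\langle[M],\tilde g^{*}\omega\rangle$ for every $m$-cocycle $\omega$, and $\pi_{M}\circ\tilde g=1_{M}$, $\pi_{N}\circ\tilde g=g$ give $\tilde g^{*}(\alpha\times\beta)=\alpha\smallsmile g^{*}\beta$ on K\"unneth generators. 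Expanding $\eta_{f}$ and $\eta_{g}$ in a K\"unneth basis built from Poincar\'e-dual bases of $H^{*}(M)$ and $H^{*}(N)$ and pairing against $[M\times N]$ collapses the resulting double sum to an alternating sum of traces of the endomorphisms of $H^{*}(M)$ assembled from $f^{*}$, $g^{*}$ and the duality isomorphisms; identifying the degree-$p$ contribution with $\tilde g^{p}=P_{N}^{-1}\circ g_{m-p}\circ P_{M}$ turns it into $\sum_{p}(-1)^{p}\op{tr}(f^{p}\circ\tilde g^{p})=\op{Lef}(f,g)$. As a sanity check, taking $N=M$ and $g=1_{M}$ makes $\vG_{g}$ the diagonal and $\tilde g^{p}$ the identity, recovering the classical expression $\sum_{p}(-1)^{p}\op{tr}(f^{p})$.

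For the local contribution at an isolated coincidence point $p$, choose coordinates $x$ on a \nbd\ of $p$ in $M$ and $y$ on a \nbd\ of $f(p)=g(p)$ in $N$, and perform the change of variables $u=x$, $v=y-f(x)$ on $M\times N$; its Jacobian is unipotent, so it preserves orientation. In these coordinates $\vG_{f}$ becomes the zero section $\{v=0\}$ and $\vG_{g}$ becomes the graph $\{v=(g-f)(u)\}$, so $(\vG_{f}\Cdot\vG_{g})_{p}$ is the localized intersection number of the zero section with the graph of $g-f$ at the isolated zero $p$ of $g-f$. By the Thom-class description of the localized product in Section \ref{secbasics} (equivalently, by the classical computation), this equals the local mapping degree, $(\vG_{f}\Cdot\vG_{g})_{p}=\op{deg}(g-f,p)$, which for non-degenerate $p$ reduces to $\op{sgn}\,\det(J_{g}(p)-J_{f}(p))$, as in the stated special case.

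Finally, as $\op{Coin}(f,g)$ is finite and $M\times N$ is compact, the localization formula (\ref{localizintno}) gives $[\vG_{f}]\Cdot[\vG_{g}]=\sum_{p}(\vG_{f}\Cdot\vG_{g})_{p}$; combining this with the two computations yields $\op{Lef}(f,g)=\sum_{p\in\op{Coin}(f,g)}\op{deg}(g-f,p)$, and the case $\op{Coin}(f,g)=\emptyset$ is immediate. I expect the main obstacle to be the sign bookkeeping in the global step: the cross-product and cup-product signs, the Poincar\'e-duality anticommutation sign (\ref{anticomm}) for the $2m$-manifold $M\times N$ (which contributes the factor $(-1)^{m}$ comparing the two orders, matching $\op{Lef}(g,f)=(-1)^{m}\op{Lef}(f,g)$), and the duality twist $P_{N}^{-1}\circ g_{m-p}\circ P_{M}$ must all be reconciled so that the trace sum emerges with exactly the normalization of $\op{Lef}(f,g)$ and so that the local number carries the correct sign. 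The orientation conventions fixed for $\vG_{f}$ and $\vG_{g}$ are precisely what make these signs cancel.
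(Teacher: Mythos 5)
Your proposal is correct and follows essentially the same route the paper takes: Part I identifies $\op{Lef}(f,g)$ with the intersection number $[\vG_{f}]\Cdot[\vG_{g}]$ by a K\"unneth-basis computation, and Part II localizes that number at the isolated coincidence points and identifies each local contribution with $\deg(g-f,p)$ (the paper carries out the local step via the Thom class of $\vG_{g}$ in \v{C}ech-de~Rham cohomology, whereas you use the orientation-preserving coordinate change to the zero-section-versus-graph picture, but this is the same standard computation). The paper itself only outlines these two parts, deferring the details to \cite{V} and \cite{BBIS} and later recovering the theorem from Theorems \ref{thlefgen} and \ref{propisolated}; your sketch fills in the same outline.
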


The above theorem applied to the case $N=M$ and $g=1_M$, the identity map of $M$, gives  the Lefschetz fixed point formula for $f$.

Basically the proof of the above theorem consists of the following two parts.
Note that, in the case under consideration, $\vG_{f}$ and $\vG_{g}$ are both $m$-cycles and
the intersection product $[\vG_{f}]\Cdot [\vG_{g}]$ is in $H_{0}(M\times N)=\Z$.
 \vv
    
\noindent
{\bf Part I.} To show that $\op{Lef}(f,g)=[\vG_{f}]\Cdot [\vG_{g}]$.
\vv
    
\noindent
{\bf Part II.} To show that $[\vG_{f}]\Cdot [\vG_{g}]$ localizes at the points of $\op{Coin}(f,g)$ and the local contribution from each point is equal to the local mapping degree.
\vv

In some literature such as \cite{V}, the intersection number does not appear explicitly. Instead the following number is
introduced.
Let $\vD_{N}$ denotes the diagonal in $N\times N$ and define $(f,g):M\ra N\times N$  by $(f,g)(x)=(f(x),g(x))$.
Consider the commutative diagram\,:
\[
\SelectTips{cm}{}
\xymatrix 
{ 
H^m(N \times N, N\times N \ssm \vD_N) \ar[r]^-{j^*}  \ar[d]^{A'_{N\times N,\vD_{N}}}_{\wr} &
H^m(N \times N) \ar[r]^-{(f,g)^*}  \ar[d]^{P'_{N\times N}}_{\wr}
& H^m(M)  \ar[d]^{P'_M}_{\wr} \\
H_m(\vD_N) \ar[r]^-{i_*} & H_m(N\times N) & H_{0}(M), \\
}
\]

The {\em Lefschetz class} of $(f,g)$ is defined by (cf. Remarks \ref{remcap} and \ref{remthom})

\begin{equationth}\label{eqC4}
L(f,g) =  (f,g)^* \circ j^* (\vP'_{\vD_{N}}) =(f,g)^* \circ (P_{N\times N}')^{-1} ([\vD_N] )\qquad\text{in}\ \ H^{m}(M).
\end{equationth}

Then we have (cf. Theorem  \ref{thcoincoin} below)

\[
P'_{M}L(f,g)=[\vG_{f}]\Cdot [\vG_{g}].
\]

Part I is of purely global nature and  can be done by direct computations taking bases of homology or cohomology of $M$ and $N$ (\cite{V}, also \cite{BBIS}). For this, it is not necessary to consider the Thom class $\vP'_{\vD_{N}}$.
In \cite{V} it is used for Part II. See also \cite{GHV} for a similar approach and the statement of the theorem as above.
In \cite{BBIS}, a simple direct proof for these is given using the expression of the Thom class of $\vG_{g}$ in the framework of
the \v{C}ech-de~Rham cohomology.
\end{section}

\section{Coincidence of two maps between manifolds of possibly different dimensions}\label{sectwomapsdd}

In this section, we generalize the results in the previous section to the case of  two maps between manifolds with possibly different dimensions.

\subsection{Intersection product with a map} 

We  recall the notion of intersection product with a map, see \cite{BBIS}, \cite{Su1} and \cite{Su2}, also \cite{Fu} in the 
algebraic category.

\begin{definition}
Let $W$ and $M$ be oriented $C^\infty$ manifolds of dimensions $m'$ and $m$, respectively, and 
$F : M \to W$ a $C^\infty$ map. We define the intersection product $M\mCdot_F\  $ so that the first diagram below is 
commutative. 
Also, for a  subcomplex  $\tilde S$ of a triangulation of  $W$, we set $S = F^{-1}({\tilde S})$ and suppose 
$S$ is  a subcomplex of a triangulation of $M$. 
We then define the localized intersection product $(M\mCdot_F\ )_S$ so that the second diagram is commutative\,:
\[
\SelectTips{cm}{}
\xymatrix
{H^p(W) \ar[r]^-{\sim}_-{P} \ar[d]^{F^*} & H_{m'-p}( W) \ar[d]^{ M\cdot_{F} } \\
H^{p}(M) \ar[r]^-{\sim}_-{P} & H_{m-p} (M),}\hspace{1.5cm}
\xymatrix{H^p(W,W\ssm {\tilde S}) \ar[r]^-{\sim}_-{A} \ar[d]^{F^*} & H_{m'-p}( \tilde S) \ar[d]^{ (M \cdot_{F}\ )_S} \\
H^{p}(M, M\ssm S) \ar[r]^-{\sim}_-{A} & H_{m-p} (S).}
\]
\end{definition}

Note that, if $M$ is a sub\mfd\ of $W$ and if $F=\iota:M\hra W$ is the inclusion, the products $M\mCdot_\iota\ $
and $(M\mCdot_{\iota}\ )_S$ coincide
with $(M\Cdot\ )_M$ and   $(M\Cdot\ )_S$, \r, defined in Section \ref{secbasics} (cf. \cite{BBIS} Proposition 3.9, also \cite{Su1} Section 7).

In the above situation, suppose $S$ has only a finite number of connected components $(S_{\l})_{\l}$. Then  
\[
H_{m-p}(S)=\bigoplus_{\l}H_{m-p}(S_{\l})
\]
 and, for a class $c$ in $H_{m-p}(S)$,
the class $(M\mCdot_{F}\, c)_{S}$ determines a class $(M\mCdot_{F}\, c)_{S_{\l}}$ in $\breve H_{m-p}(S_{\l})$ for each $\l$.
We have the ``residue theorem'', which basically follows from (\ref{PoinAlex})\,:

\begin{theorem}\label{thresidue}
Let $C$ be an $(m'-p)$-cycle in $W$ with support $\tilde S=|C|$. Suppose $S=F^{-1} \tilde S$ has a finite number
of connected components $(S_\l)_{\l}$. Then we have
\[
M\mCdot_{F}\, [C]=\sum_\l(i_\l)_*(M\mCdot_{F} \,C)_{S_\l}\qquad\text{in}\ \ H_{m-p}(M),
\]
where $i_{\l}:S_{\l}\hra M$ denotes the inclusion.
\end{theorem}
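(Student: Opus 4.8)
The plan is to reduce everything to the two commutative squares that define the product $M\mCdot_F$ and its localized version, glued together along the Poincar\'e--Alexander diagram (\ref{PoinAlex}). Writing $i_W:\tilde S\hra W$ and $i_M:S\hra M$ for the inclusions, recall that $[C]\in H_{m'-p}(W)$ is by the stated convention the image $(i_W)_*C$ of the class $C\in H_{m'-p}(\tilde S)$ carried by the support. Since $i_M$ restricts on each component to $i_\l:S_\l\hra M$ and $H_{m-p}(S)=\bigoplus_\l H_{m-p}(S_\l)$, we have $(i_M)_*=\sum_\l(i_\l)_*$ on $H_{m-p}(S)$; so it suffices to prove the single identity $M\mCdot_F[C]=(i_M)_*(M\mCdot_F C)_S$ and then read off the stated formula by projecting onto the components.

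First I would unwind the two definitions. The defining square for $M\mCdot_F$ gives $M\mCdot_F=P_M\circ F^*\circ P_W^{-1}$, where $F^*\colon H^p(W)\to H^p(M)$ is the absolute pullback, while the second square gives $(M\mCdot_F\ )_S=A_{M,S}\circ F^*\circ A_{W,\tilde S}^{-1}$, where now $F^*\colon H^p(W,W\ssm\tilde S)\to H^p(M,M\ssm S)$ is the \emph{relative} pullback, legitimate because $S=F^{-1}(\tilde S)$ forces $F$ to be a map of pairs $(M,M\ssm S)\to(W,W\ssm\tilde S)$. Applying diagram (\ref{PoinAlex}) to $(W,\tilde S)$, i.e.\ $P_W\circ j^*=(i_W)_*\circ A_{W,\tilde S}$, I rewrite
\[
M\mCdot_F[C]=P_M\circ F^*\circ P_W^{-1}(i_W)_*C=P_M\circ F^*\circ j^*\circ A_{W,\tilde S}^{-1}C .
\]

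The heart of the argument is then the naturality of the relative-to-absolute map $j^*$ with respect to $F$: since $F$ is a map of pairs, the square relating $H^\bullet(W,W\ssm\tilde S)\to H^\bullet(W)$ to $H^\bullet(M,M\ssm S)\to H^\bullet(M)$ commutes, so that $F^*_{\mathrm{abs}}\circ j^*_W=j^*_M\circ F^*_{\mathrm{rel}}$. Substituting and then invoking (\ref{PoinAlex}) for $(M,S)$, namely $P_M\circ j^*_M=(i_M)_*\circ A_{M,S}$, yields
\[
M\mCdot_F[C]=P_M\circ j^*_M\circ F^*_{\mathrm{rel}}\circ A_{W,\tilde S}^{-1}C=(i_M)_*\circ A_{M,S}\circ F^*_{\mathrm{rel}}\circ A_{W,\tilde S}^{-1}C=(i_M)_*(M\mCdot_F C)_S ,
\]
which is exactly the desired identity; decomposing over the components then finishes the proof.

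The step I expect to be the only genuine subtlety is keeping the two incarnations of $F^*$ --- absolute and relative --- properly distinguished and then reconciled through the naturality square; everything else is a formal diagram chase through (\ref{PoinAlex}) applied at both $W$ and $M$. One should also check at the outset that $F$ really does send $M\ssm S$ into $W\ssm\tilde S$, so that the relative pullback and the map-of-pairs structure are defined; this is immediate from $S=F^{-1}(\tilde S)$, and no orientation or sign correction enters because $M\mCdot_F$ is defined purely by these commutative squares.
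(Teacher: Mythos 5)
Your argument is correct and is precisely the diagram chase the paper has in mind: the paper offers no written proof beyond the remark that the theorem ``basically follows from (\ref{PoinAlex})'', and your chase through the two defining squares, the naturality of $j^*$ for the map of pairs $F:(M,M\ssm S)\to(W,W\ssm\tilde S)$, and the decomposition $H_{m-p}(S)=\bigoplus_\l H_{m-p}(S_\l)$ is exactly that omitted verification.
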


\subsection{Global classes}

\subsubsection{The Lefschetz coincidence cohomology class.}

We recall the Lefschetz coincidence cohomology class as considered in \cite{BLM}, which is a generalization of
(\ref{eqC4}). 
See also \cite{S} for related problems.

Let $X$ be a topological space and $N$ an 
oriented manifold of dimension $n$. Suppose we have two maps
 $f, g :X \to N$.
Then we have the map $(f,g):X\to N\times N$ and the  diagram\,:
\[
\SelectTips{cm}{}
\xymatrix { 
H^n(N\times N, N\times N \ssm \vD_N)  \ar[r]^-{j^*} \ar[d]_-{\wr}^-{A'_{N\times N,\vD_N}}&
H^n(N\times N)  \ar[r]^-{(f,g)^*}  \ar[d]_-{\wr}^-{P'_{N\times N}}   & H^n(X)\\
H_n ( \vD_N)  \ar[r]^-{i_*}  &H_n( N\times N).  &\\
}
\]
Denoting by $\vP'_{\vD_{N}}=(A')^{-1}\vD_{N}$ the Thom class of
$\vD_{N}$, 
the {\em Lefschetz coincidence cohomology class} is defined 
as 
\begin{equationth}\label{Lefcoh2}
L(f,g) = ( j \circ (f,g))^* (\vP'_{\vD_N}) =(f,g)^* (P')^{-1} [\vD_N]=(-1)^{n}(f,g)^* P^{-1} [\vD_N]\qquad\text{in}\ \ H^n(X).
\end{equationth}
In \cite{BLM} it is assumed that $N$ is compact and the cohomology is with rational coefficients.
Note that $L(g,f) =(-1)^{n^{2}}L(f,g) =(-1)^{n}L(f,g)$.
Note also that as long as we consider the global class, it is not necessary to consider the Thom class of $\vD_{N}$.

In the case $X=M$ is an oriented \mfd\ of dimension $m$, 
we have the commutative diagram 
\[
\SelectTips{cm}{}
\xymatrix 
{ 
H^n(N\times N)  \ar[d]^-{P_{N\times N}}_-{\wr} \ar[r]^-{(f,g)^*}  & H^n(M) \ar[d]_-{\wr}^-{P_{M}}\\
H_n( N\times N)\ar[r]^-{M \cdot_{(f,g)}} & H_{m-n} (M)}
\]
so that
\begin{equationth}\label{Lefcoh}
P_ML(f,g) =(-1)^{n}M\mCdot_{(f,g)}\hspace{.3mm}[\vD_N].
\end{equationth}

\subsubsection{Global coincidence homology class}

The global homology  coincidence class is defined in \cite{BBIS} in the framework of  \v Cech-de~Rham cohomology.
It can be done as well in the  combinatorial context.

Let $M$ and $N$ be oriented \mfd s of dimensions $m$ and $n$, \r. Suppose we have two maps $f,g:M\ra N$.
Let $1_M$ denote the identity map of $M$ and consider the map $\tilde f = (1_M, f) : M \to M\times N$.
Setting $W=M\times N$,  we have the commutative diagram\,:

\[
\SelectTips{cm}{}
\xymatrix 
{H^n(W) \ar[r]^-{\sim}_-{P_{W}} \ar[d]^{\tilde f^*}
& H_{m}( W) \ar[d]^{M \cdot_{\tilde f} \ }\\
H^{n}(M) \ar[r]^-{\sim}_-{P_{M}} & H_{m-n} (M).}
\]
Let us denote by $\vG_g$ the graph of $g$ in $W$, that defines an $m$-cycle whose 
homology class in $H_m(W)$ is denoted by $[\vG_g]$. 

\begin{definition}\label{gch} The {\em global coincidence homology class} $\vL(f,g)$ is 
defined by
\[
\vL(f,g) =M \mCdot_{\tilde f} \hspace{.3mm}[\vG_g] \qquad\text{in}\ \ H_{m-n}(M).
\]
\end{definition}

Note that $\tilde f$ induces an \iso\ $\tilde f_*: H_{m-n}(M)\stackrel\sim\ra  H_{m-n}(\vG_f)$
and   $\vL(f,g)$ corresponds to $\vG_f\Cdot [ \vG_g]$ in $H_{m-n}(\vG_f)$, which is sent to
$[\vG_f]\Cdot [ \vG_g]$ in $H_{m-n}(W)$ by the canonical \homo\ 
$ H_{m-n}(\vG_f)\ra H_{m-n}(W)$. 

\begin{remark} {\bf 1.} $\vL(g,f)=(-1)^{n^{2}}\vL(f,g)=(-1)^{n}\vL(f,g)$.
\vv

\noindent
{\bf 2.}  In particular, if $m=n$ and if $M$ and $N$ are compact, $\vL(f,g) =[\vG_f]\Cdot [ \vG_g]$
is the intersection number, which in turn coincides with the Lefschetz number $\op{Lef}(f,g)$ defined in Section \ref{secsamedim}.
\end{remark} 

\subsubsection{``Coincidence" of the two Lefschetz coincidence  classes}

\begin{theorem}\label{thcoincoin} If $M$ and $N$ are compact,
\[
P_{M}L(f,g)=(-1)^{n(m-n)}\vL(f,g),\quad\text{i.e.,}\quad P'_{M}L(f,g)=\vL(f,g)\quad\text{in}\ \ H_{m-n}(M,\Q).
\]
\end{theorem}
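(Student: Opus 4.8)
The plan is to start from the identity (\ref{Lefcoh}), namely $P_ML(f,g)=(-1)^{n}M\mCdot_{(f,g)}\,[\vD_N]$, which reduces the theorem to the comparison
\[
M\mCdot_{(f,g)}\,[\vD_N]=(-1)^{mn}\,M\mCdot_{\tilde f}\,[\vG_g]=(-1)^{mn}\vL(f,g)\qquad\text{in}\ \ H_{m-n}(M),
\]
since $(-1)^{n}(-1)^{mn}=(-1)^{n(m-n)}$. The bridge between the two sides is the map $H\colon W=M\times N\ra N\times N$ given by $H(x,y)=(y,g(x))$. It satisfies $(f,g)=H\circ\tilde f$ and $H^{-1}(\vD_N)=\vG_g$, and a direct check shows $H$ is transverse to $\vD_N$ along $\vG_g$ (both have codimension $n$). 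First I would record a composition formula for the intersection product with a map: since $M\mCdot_F=P_M\circ F^{*}\circ P_W^{-1}$ and pullbacks compose contravariantly, inserting $P_W^{-1}\circ P_W$ yields, with no sign,
\[
M\mCdot_{(f,g)}=M\mCdot_{H\circ\tilde f}=(M\mCdot_{\tilde f})\circ(W\mCdot_{H}).
\]
Applied to $[\vD_N]$ this gives $M\mCdot_{(f,g)}[\vD_N]=M\mCdot_{\tilde f}\bigl(W\mCdot_{H}[\vD_N]\bigr)$, so everything comes down to identifying the class $W\mCdot_{H}[\vD_N]\in H_m(W)$.

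Next I would identify $W\mCdot_H[\vD_N]$ as a multiple of $[\vG_g]$. Using the diagram (\ref{PoinAlex}) for the pair $(N\times N,\vD_N)$ together with (\ref{thomalex}), one has $P_{N\times N}^{-1}[\vD_N]=j^{*}\vP_{\vD_N}$; since $H$ carries $W\ssm\vG_g$ into $N\times N\ssm\vD_N$, naturality of $j^{*}$ gives $H^{*}j^{*}\vP_{\vD_N}=j^{*}(H^{*}\vP_{\vD_N})$. Because $H$ is transverse with $H^{-1}(\vD_N)=\vG_g$, the pullback $H^{*}\vP_{\vD_N}$ is the Thom class of $\vG_g$ taken with the orientation pulled back from $\vD_N$; writing $H^{*}\vP_{\vD_N}=\tau\,\vP_{\vG_g}$, where $\vP_{\vG_g}$ uses the chosen orientation of $\vG_g$ (via $\tilde g$) and $\tau=\pm1$ records the discrepancy, the same diagram (\ref{PoinAlex}) for $(W,\vG_g)$ then yields $W\mCdot_H[\vD_N]=\tau\,[\vG_g]$. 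Combining with the composition formula gives $M\mCdot_{(f,g)}[\vD_N]=\tau\,\vL(f,g)$ and hence $P_ML(f,g)=(-1)^{n}\tau\,\vL(f,g)$.

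The crux, and the step I expect to demand the most care, is the orientation sign $\tau$. Here I would work in local coordinates $x=(x_1,\dots,x_m)$ on $M$ and $y=(y_1,\dots,y_n)$ on $N$, with coordinates $(u,v)$ on $N\times N$, give $W$ and $N\times N$ their product orientations, and use the paper's convention that the orientation of a dual cell followed by that of its simplex recovers the ambient orientation (normal first). One computes that a positively oriented normal frame of $\vD_N$ is $(-1)^{n}\partial_{v_1}\wedge\cdots\wedge\partial_{v_n}$; that $dH$ sends $\partial_{y_k}$ to $\partial_{u_k}\equiv-\partial_{v_k}$ modulo $T\vD_N$, whence the pulled-back normal orientation of $\vG_g$ is $\partial_{y_1}\wedge\cdots\wedge\partial_{y_n}$; and that comparing this, via the normal-first rule, with the graph orientation $d\tilde g(\partial_{x_i})$ of $\vG_g$ requires moving the $m$ vectors $\partial_{x_i}$ past the $n$ vectors $\partial_{y_k}$, producing exactly $\tau=(-1)^{mn}$. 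Substituting $\tau=(-1)^{mn}$ gives $P_ML(f,g)=(-1)^{n+mn}\vL(f,g)=(-1)^{n(m-n)}\vL(f,g)$, and the second (primed) form is immediate from Remark \ref{remcap}, as $P'_M=(-1)^{n(m-n)}P_M$ on $H^{n}(M)$.

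As a consistency check I would rerun the whole argument with the primed dualities and Thom classes of Remarks \ref{remcap} and \ref{remthom}. There the conversion $P'_M\tilde f^{*}(P'_W)^{-1}=(-1)^{n}M\mCdot_{\tilde f}$ and the analogous Thom-class sign $H^{*}\vP'_{\vD_N}=(-1)^{n}\vP'_{\vG_g}$ (which follows from $\tau=(-1)^{mn}$ via $\vP'_{\vD_N}=(-1)^{n}\vP_{\vD_N}$ and $\vP'_{\vG_g}=(-1)^{mn}\vP_{\vG_g}$) combine to $(-1)^{2n}=1$, reproducing $P'_ML(f,g)=\vL(f,g)$ directly and confirming the bookkeeping. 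The only genuine obstacle is thus the local orientation computation yielding $(-1)^{mn}$; once that sign is pinned down, the rest is a formal diagram chase through (\ref{Lefcoh}), (\ref{PoinAlex}) and (\ref{thomalex}).
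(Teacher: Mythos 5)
Your argument is correct, and the sign comes out right ($(-1)^{n+mn}=(-1)^{n(m-n)}$ since $n^{2}\equiv n \bmod 2$), but it follows a genuinely different route from the paper. The paper stays entirely in cohomology: it reduces, via (\ref{Lefcoh}), to comparing $\eta_{\vD}=P_{N\times N}^{-1}(\vD_{N})$ and $\eta_{g}=P_{M\times N}^{-1}(\vG_{g})$, expands both in K\"unneth bases built from dual bases of $H^{*}(M,\Q)$ and $H^{*}(N,\Q)$, and matches coefficients term by term; the sign $(-1)^{n(m-n)}$ emerges from the combinatorics of cup products of basis elements. You instead factor $(f,g)=H\circ\tilde f$ through $H(x,y)=(y,g(x))$, use the (correct, sign-free) composition rule $M\mCdot_{(f,g)}=(M\mCdot_{\tilde f})\circ(W\mCdot_{H})$, and reduce the whole theorem to the single geometric identity $W\mCdot_{H}[\vD_N]=(-1)^{mn}[\vG_g]$, which you settle by a local orientation count of normal frames; your computation of that sign is right under the paper's ``normal first'' convention. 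Your approach buys something real: it works with $\Z$ coefficients and does not use compactness or finite-dimensionality of the cohomology (the paper's dual bases require both), and it isolates all sign bookkeeping in one transparent linear-algebra step. What it costs is one ingredient the paper never states as a lemma, namely that the pullback of a Thom class under a map transverse to the submanifold is the Thom class of the preimage up to the orientation sign. To make this airtight inside the paper's combinatorial framework you should note that $A\,H^{*}\vP_{\vD_N}=(W\mCdot_{H}\vD_N)_{\vG_g}$ lives in $H_{m}(\vG_g)\cong\Z$ (Borel--Moore homology of the connected oriented $m$-manifold $\vG_g\cong M$), hence is an integer multiple of the fundamental class, and that this integer is computed at a single generic point exactly by your frame comparison --- for instance via the \v{C}ech--de~Rham representative of $\vP_{\vD_N}$ as in Theorems \ref{classloccdr} and \ref{propisolated}. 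With that sentence added, your proof is complete and, arguably, more conceptual than the paper's.
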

\begin{proof}
Let us consider the Poincar\'e \iso s
\[
P_{N\times N}:H^{n}(N\times N)\overset\sim\lra H_{n}(N\times N)\quad\text{and}\quad P_{M\times N}:H^{n}(M\times N)\overset\sim\lra H_{m}(N\times N)
\]
and set $\eta_{\vD}=P_{N\times N}^{-1}(\vD_{N})$ and $\eta_{g}=P_{M\times N}^{-1}(\vG_{g})$. By (\ref{Lefcoh}), it suffices to prove
\begin{equationth}\label{eqtoprove}
(g,f)^{*}\eta_{\vD}=(-1)^{n(m-n)}\tilde f^{*}\eta_{g}\qquad\text{in}\ \ H^{n}(M,\Q).
\end{equationth}

Let $\{\mu_{i}^{p} \}_i$ be a  basis of $H^p(M)$.
We set $q=m-p$ and let $\{\check{\mu}^{q}_{j} \}_j$ be the basis of $H^q(M)$ dual 
 to $\{\mu_{i}^{p}\}_i$\,:
\[
\langle M,\mu_{i}^{p}\smallsmile \check{\mu}_{j}^{q}\rangle=\delta_{ij}.
\]
We also take a basis   $\{\nu_k^{p} \}_k$  of $H^p(N)$
and  the  basis $\{\check{\nu}^{r}_\ell \}_\ell$ of $H^q(N)$ dual  to  
$\{\nu_k^{p} \}_k$, $r=n-p$.
By the K\"unneth formula, a basis of $H^n(M\times N)$  is given by
\[
\left\{\,  \xi^{p,r}_{i,\ell}=\pi_1^* \mu_{i}^{p}\smallsmile \pi_2^* \check\nu_\ell^{r}\, \right\}_{p+r=n},
\]
where $\pi_1$ and $\pi_2$ are projections onto the first and second factors. Also a basis of the cohomology $H^n(N\times N)$  is given by
\[
\left\{\,  \eta^{p,r}_{k,\ell}=\varpi_1^* \nu_{k}^{p}\smallsmile \varpi_2^* \check\nu_\ell^{r}\, \right\}_{p+r=n},
\]
where $\varpi_1$ and $\varpi_2$ are projections onto the first and second factors. By definition
\[
\langle N\times N,\eta_{\vD}\smallsmile\varphi\rangle=\langle\vD,\varphi\rangle.
\]
We write 
\[
\eta_{\vD}=\sum a^{p}_{k,\ell}\eta^{p,r}_{k,\ell}
\]
Letting $\varphi=\varpi_1^* \check\nu_{k'}^{p'}\smallsmile \varpi_2^* \nu_{\ell'}^{n-p'}$, we see that 
$a^{p}_{k,\ell}=(-1)^{n-p}\delta_{k,\ell}$ and $\eta_{\vD}=\sum(-1)^{n-p} \eta^{p,r}_{k,k}$. Thus we have
\[
(g,f)^{*}\eta_{\vD}=\sum(-1)^{n-p} g^* \nu_{k}^{p}\smallsmile f^* \check\nu_k^{n-p}.
\]
Next, by definition
\[
\langle M\times N,\eta_{g}\smallsmile\psi\rangle=\langle\vG_{g},\psi\rangle.
\]

We write 
\[
\eta_{g}=\sum b^{p}_{i,\ell}\xi^{p,r}_{i,\ell}
\]
By similar computations, letting $\psi=\pi_1^* \check\mu_{i'}^{p'}\smallsmile \pi_2^* \nu_{\ell'}^{m-p'}$,  we have
\[
\tilde f^{*}\eta_{g}=\sum(-1)^{\varepsilon} g^* \nu_{k}^{p}\smallsmile f^* \check\nu_k^{n-p},
\]
where $\varepsilon=(m-p)p-(n-p)m$, and we have (\ref{eqtoprove}).
\end{proof}

\subsection{Local classes}

\subsubsection{The local coincidence homology class}

We recall the local coincidence homology class defined in \cite{BBIS} in our context.

Note that $\op{Coin}(f,g)=\tilde f^{-1}(\vG_g)$, which will be simply denoted by $C$. 
We have the commutative diagram\,:
\[
\SelectTips{cm}{}
\xymatrix  
{ H^n(W, W\ssm\vG_g) \ar[r]^-{\sim}_-{A} \ar[d]^{\tilde f^*}
& H_{m}(\vG_g) \ar[d]^{(M \cdot_{\tilde f}\ )_C}\\
H^{n}(M,M\ssm C) \ar[r]^-{\sim}_-{A} & H_{m-n} (C).}
\]

\begin{definition}\label{lch} (\cite{BBIS}, Definition 4.2) The {\em local coincidence class} $\vL (f, g; C)$ 
of the pair $(f, g)$ at $C$ is defined to be the localized intersection class :
\[
\vL(f,g;C)= (M\mCdot_{\tilde f} \hspace{.7mm}\vG_g)_C \qquad \text{  in} \ \  H_{m-n}(C).
\]
\end{definition}

If we denote by $\vP_{\vG_g}$ the Thom class of $\vG_g$, we have
\begin{equationth}\label{coinlocthom}
\vL(f,g;C)=A\tilde f^*\vP_{\vG_g}.
\end{equationth}

Suppose $C = \op{Coin}(f,g)$ has a finite number of connected components $(C_\l)_\l$. Then we have 
$H_{m-n}(C) = \oplus_\l H_{m-n}(C_\l)$ and accordingly we have the local coincidence class 
$\vL(f,g;C_\l)$  in $H_{m-n}(C_\l)$. We have a general coincidence point theorem\,:

\begin {theorem}[\cite{BBIS} Theorem 4.4]
In the above situation
\[
\vL(f,g) = \sum_\l (\iota_\l)_* \vL(f,g;C_\l) \qquad \text{ in}\ \  H_{m-n}(M),
\]
where $\iota_\l:C_\l\hra M$ denotes the inclusion.\label{thlefgen}
\end{theorem}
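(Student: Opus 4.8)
The plan is to recognize the statement as the special case of the residue theorem (Theorem~\ref{thresidue}) in which the cycle is the graph $\vG_g$ and the map is $\tilde f$. Concretely, I would take $W=M\times N$, of dimension $m'=m+n$, set $F=\tilde f=(1_M,f)$, and let the cycle of Theorem~\ref{thresidue} be $\vG_g$ itself, with support $\tilde S=|\vG_g|=\vG_g$. Since $\vG_g$ is an $m$-cycle, the index $p$ of the residue theorem is fixed by $m'-p=m$, i.e.\ $p=n$, so the target group is $H_{m-p}(M)=H_{m-n}(M)$, as required. The preimage is $S=\tilde f^{-1}(\vG_g)=\op{Coin}(f,g)=C$, with connected components $(C_\l)_\l$. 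Under these identifications the two sides match on the nose: by Definition~\ref{gch} the left-hand side $M\mCdot_{\tilde f}[\vG_g]$ is exactly $\vL(f,g)$, and under the splitting $H_{m-n}(C)=\bigoplus_\l H_{m-n}(C_\l)$ each summand $(M\mCdot_{\tilde f}\,\vG_g)_{C_\l}$ on the right is precisely $\vL(f,g;C_\l)$ of Definition~\ref{lch}, with $i_\l=\iota_\l:C_\l\hra M$ the inclusion. So the conclusion of Theorem~\ref{thresidue} reads verbatim as the asserted identity.

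For completeness I would also spell out the underlying diagram chase, which is what makes Theorem~\ref{thresidue} hold in the first place. Starting from $[\vG_g]=P_W(j^*\vP_{\vG_g})$ — a consequence of $\vG_g=A\,\vP_{\vG_g}$ in~(\ref{thomalex}) together with the relation $a=P_M\circ j^*=i_*\circ A$ from the extended diagram~(\ref{PoinAlex}) — the defining square for $M\mCdot_{\tilde f}$ gives
\[
\vL(f,g)=M\mCdot_{\tilde f}[\vG_g]=P_M\bigl(\tilde f^*j^*\vP_{\vG_g}\bigr)=P_M\bigl(j^*\tilde f^*\vP_{\vG_g}\bigr),
\]
where the last equality uses naturality of $j^*$ under $\tilde f^*$; this is legitimate precisely because $\tilde f^{-1}(\vG_g)=C$, so that $\tilde f$ carries the pair $(M,M\ssm C)$ into $(W,W\ssm\vG_g)$. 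Applying the Poincar\'e--Alexander square~(\ref{PoinAlex}) for the pair $(M,C)$, namely $P_M\circ j^*=i_*\circ A$, and then~(\ref{coinlocthom}), turns this into $i_*\bigl(A\,\tilde f^*\vP_{\vG_g}\bigr)=i_*\,\vL(f,g;C)$.

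Finally I would split $i_*$ over the components. Since $C=\bigsqcup_\l C_\l$ gives $H_{m-n}(C)=\bigoplus_\l H_{m-n}(C_\l)$ and $i|_{C_\l}=\iota_\l$, the map $i_*$ decomposes as $\sum_\l(\iota_\l)_*$ applied to the respective summands $\vL(f,g;C_\l)$ of $\vL(f,g;C)$, which yields the claimed formula.

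The argument requires no analytic input; the only point needing care — and the one I would flag as the main obstacle — is the bookkeeping of the various relative groups and maps. One must keep the two pairs $(W,W\ssm\vG_g)$ and $(M,M\ssm C)$ distinct, invoke the correct Alexander isomorphisms $A_{W,\vG_g}$ and $A_{M,C}$ (the symbols $j^*$, $A$, $P$ each denote different maps on the $W$ and $M$ sides), and verify the naturality square relating $\tilde f^*$ on absolute and relative cohomology, which hinges entirely on the identity $\tilde f^{-1}(\vG_g)=C$. Once these commutativities are correctly set up, the result is immediate.
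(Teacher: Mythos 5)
Your proposal is correct and is exactly the derivation the paper intends: Theorem~\ref{thlefgen} is the special case of the residue theorem (Theorem~\ref{thresidue}) with $W=M\times N$, $F=\tilde f$, the cycle $\vG_g$ (so $p=n$ and $S=\tilde f^{-1}(\vG_g)=C$), and the paper itself only cites \cite{BBIS} while noting that Theorem~\ref{thresidue} follows from the diagram~(\ref{PoinAlex}) by the chase you spell out. No gaps.
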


In the sequel we give explicit expressions of the local classes. For this, we use (\ref{coinlocthom}) and a representation 
of the Thom class in the relative \v Cech-de~Rham cohomology.
\subsubsection{\v Cech-de~Rham cohomology}\label{CdR}

Let $M$ be a $C^\infty$ \mfd\ of dimension $m$. For an open set $U$ of  $M$, we
denote by $A^p(U)$  the vector space of complex valued $C^{\infty}$ $p$-forms on $U$. The 
cohomology of the complex $(A^*(M),d)$ is the de~Rham cohomology
$H^*_d(M)$.
The \v Cech-de~Rham cohomology may be defined for an arbitrary open covering of
$M$, however here we only consider  coverings  consisting of  two open sets.

\paragraph{\v Cech-de~Rham cohomology\,:}
Let $\U=\{U_0, U_1\}$ be an open covering of $M$. We set
$U_{01}=U_0\cap U_1$ and define the complex vector space $A^p(\U)$ as
\[
A^p(\U)=A^p(U_0)\oplus A^p(U_1)\oplus A^{p-1}(U_{01}).
\]
An element $\sigma$ in $A^p(\U)$ is given by a triple
$\sigma=(\sigma_0, \sigma_1, \sigma_{01})$ with $\sigma_i$ a $p$-form on $U_i$, $i=0,1$, and
$\sigma_{01}$  a $(p-1)$-form on $U_{01}$. We define an operator $D: A^p(\U) \to A^{p+1}(\U)$ by
\[
D\sigma=(d\sigma_0, d\sigma_1, \sigma_1-\sigma_0-d\sigma_{01}).
\]
Then we see that  $D \circ D =0$ so that we have a complex
$(A^*(\U),D)$. The $p$-th {\em \v{C}ech-de~Rham cohomology} of $\U$, denoted by
$H_D^p(\U)$, is the $p$-th cohomology of this complex. It is also abbreviated as  \v CdR cohomology.
We denote the class of a cocycle $\sigma$ by $[\sigma]$.
It can be shown that the map $A^p(M) \to A^p(\U)$ given by
$\o \mapsto (\o|_{U_0}, \o|_{U_1}, 0)$
induces an isomorphism
\begin{equationth}\label{dRCdR}
\a:H^p_d(M)\stackrel\sim\lra  H^p_D(\U).
\end{equationth}

\paragraph{Relative \v Cech-de~Rham cohomology\,:}
Let  $S$ be a closed set in $M$. Letting
$U_0=M\ssm  S$ and $U_1$ a \nbd\ of $S$ in $M$, we
consider the covering $\U=\{U_0,U_1\}$ of $M$.
 If we set
\[
A^p(\U,U_0)=\{\,\sigma\in A^p(\U)\mid \sigma_0=0\,\},
\]
we see that $(A^*(\U,U_0),D)$ is a subcomplex of $(A^*(\U),D)$. We denote by
$H^p_D(\U,U_0)$ the $p$-th cohomology of this complex. 

Suppose $S$ is a subcomplex relative to a $C^{\infty}$ triangulation $K_{0}$ of $M$.
Then we have
 a natural \iso\,:
\[
H^p_D(\U,U_0)\simeq H^p(M,M\ssm S;\C).
\]
Let $K$, $K'$ and $K^{*}$
be as before. We may assume that $U_{1}$ contains the open star $O_{K'}(S)$ of $S$ in $K'$. Let $R_{1}$ be an $m$-dimensional \mfd\ with piecewise $C^{\infty}$ boundary in $O_{K'}(S)$ containing $S$
in its interior, for example we may take the star $S_{K''}(S)$ of $S$ in the barycentric subdivision
$K''$ of $K'$ as $R_{1}$. We set $R_{01}=-\partial R_{1}$.

\begin{theorem}[\cite{Su1,Su2}] The Alexander \iso
\[
A:H^p_D(\U,U_0)\overset\sim\lra H_{m-p}(S,\C)
\]
is induced from the \homo
\[
A^{p}(\U,U_{0})\lra C_{m-p}^{K}(S,\C)\quad\text{given by}\ \sigma=(0,\sigma_{1},\sigma_{01})\mapsto\sum_{\bm{s}}\Big(\int_{\bm{s}^{*}\cap R_{1}}\sigma_{1}+\int_{\bm{s}^{*}\cap R_{01}}\sigma_{01}\Big)\bm{s},
\]
where $\bm{s}$ runs through the $(m-p)$-simplices of $K$ in $S$.\label{alexincdr} 
\end{theorem}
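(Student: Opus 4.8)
The plan is to exhibit the displayed cochain-to-chain map as the composite of the de~Rham comparison \iso\ with the combinatorial Alexander \homo\ recalled in Section~\ref{secbasics}, and then to verify that this composite is a chain map inducing $A$. Recall that in the combinatorial setting $A$ is induced by $P\colon u\mapsto\sum_{\bm s}\langle\bm s^{*},u\rangle\,\bm s$ restricted to the relative cochains $C^{p}_{K^{*}}(M,M\ssm O_{K'}(S))$, the sum running over the $(m-p)$-simplices $\bm s$ in $S$. Thus, granting the \iso\ $H^{p}_{D}(\U,U_{0})\simeq H^{p}(M,M\ssm S;\C)$, it suffices to show that a relative \v CdR cocycle $\sigma=(0,\sigma_{1},\sigma_{01})$ corresponds to the relative cochain $u_{\sigma}$ with
\[
\langle\bm s^{*},u_{\sigma}\rangle=\int_{\bm s^{*}\cap R_{1}}\sigma_{1}+\int_{\bm s^{*}\cap R_{01}}\sigma_{01}.
\]

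The key observation is that the right-hand side is precisely the \v CdR integration of $\sigma$ over the $p$-cell $\bm s^{*}$, computed by means of a decomposition of $\bm s^{*}$ subordinate to the covering $\U$. Indeed, since $R_{1}\subset O_{K'}(S)\subset U_{1}$ contains $S$ in its interior, the piece $\bm s^{*}\cap R_{1}$ lies in $U_{1}$, its complement $\bm s^{*}\ssm\op{int}R_{1}$ avoids $S$ and hence lies in $U_{0}=M\ssm S$, and the two meet along $\bm s^{*}\cap\partial R_{1}\subset U_{01}$. The \v CdR integration over $\bm s^{*}$ attached to this decomposition is $\int_{\bm s^{*}\ssm\op{int}R_{1}}\sigma_{0}+\int_{\bm s^{*}\cap R_{1}}\sigma_{1}+\int_{\bm s^{*}\cap R_{01}}\sigma_{01}$, which with $\sigma_{0}=0$ is exactly the displayed formula; the orientation convention $R_{01}=-\partial R_{1}$ is the one matching the sign in $D\sigma=(d\sigma_{0},d\sigma_{1},\sigma_{1}-\sigma_{0}-d\sigma_{01})$.

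It remains to check that the assignment $\Phi\colon\sigma\mapsto\sum_{\bm s}\langle\bm s^{*},u_{\sigma}\rangle\,\bm s$ is a chain map, so that it descends to $H^{p}_{D}(\U,U_{0})\ra H_{m-p}(S;\C)$; this is the step demanding care and is where I expect the main obstacle. Writing $\partial\bm s=\sum_{\bm t}[\bm s:\bm t]\,\bm t$ and using the duality between the boundary on $K$-chains and the incidence of dual cells, $\partial\bm t^{*}=\pm\sum_{\bm s}[\bm s:\bm t]\,\bm s^{*}$ (with $\bm t$ a face of $\bm s$), one rewrites the coefficient of each $(m-p-1)$-simplex $\bm t$ in $\partial\bigl(\sum_{\bm s}\langle\bm s^{*},u_{\sigma}\rangle\,\bm s\bigr)$ as \v CdR integrals over $\partial\bm t^{*}$. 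Applying Stokes' theorem on $\bm t^{*}\cap R_{1}$ and on $\bm t^{*}\cap\partial R_{1}$ then converts these into the integral of $d\sigma_{1}$ over $\bm t^{*}\cap R_{1}$ and of $\sigma_{1}-d\sigma_{01}$ over $\bm t^{*}\cap\partial R_{1}$, i.e.\ into the \v CdR integral of $D\sigma$ over $\bm t^{*}$; hence $\partial\circ\Phi=\pm\,\Phi\circ D$, so $D$-cocycles go to $K$-cycles while relative coboundaries go to boundaries. The delicate point throughout is the bookkeeping of orientations of $\bm s^{*}\cap R_{1}$ and $\bm s^{*}\cap\partial R_{1}$, reconciling the convention that the orientation of $\bm s^{*}$ followed by that of $\bm s$ gives that of $M$ with the sign in $R_{01}=-\partial R_{1}$ and with the incidence signs of the dual cells. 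Once this is settled, commutativity of the composite with the square (\ref{PoinAlex}) identifies it with $A_{M,S}$, which is already known to be an \iso, completing the proof.
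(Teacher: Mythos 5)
The paper itself contains no proof of this statement---it is quoted from \cite{Su1} and \cite{Su2}---and your argument is essentially the one found there: interpret the coefficient $c_{\bm s}$ as the \v Cech-de~Rham integration of $\sigma$ over the dual cell $\bm s^{*}$ with respect to the honeycomb decomposition $\{\bm s^{*}\ssm\op{int}R_{1},\ \bm s^{*}\cap R_{1}\}$ (the $\sigma_{0}$-term vanishing since $\sigma_{0}=0$ and, for $\bm s\not\subset S$, the whole coefficient vanishing since $\bm s^{*}\cap O_{K'}(S)=\emptyset$), verify $\partial\circ\Phi=\pm\,\Phi\circ D$ by Stokes, and compose with the combinatorial $P$ on relative cochains. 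The only step I would tighten is your closing sentence: commutativity with the square (\ref{PoinAlex}) does not by itself pin the composite down to $A_{M,S}$, since neither $j^{*}$ nor $i_{*}$ there is injective in general; what you actually need---and have already essentially set up at the start---is that $\sigma\mapsto u_{\sigma}$ realizes the canonical \iso\ $H^{p}_{D}(\U,U_{0})\simeq H^{p}(M,M\ssm S;\C)$ at the cochain level, after which the composite is $A_{M,S}$ by its very definition as the map induced from $P$ on $C^{p}_{K^{*}}(M,M\ssm O_{K'}(S))$.
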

\subsubsection{Explicit expressions of  local classes}

If we use the \v{C}ech-de~Rham cohomology, the local class is given as follows. 
Let $W_0=W\ssm\vG_g$
and $W_1$ a \nbd\ of $\vG_g$ and consider the covering $\W=\{W_0,W_1\}$ of $W$. Let
$(0,\psi_1,\psi_{01})$ be a representative of the Thom class $\vP_{\vG_g}$ in $H^n_D(\W,W_0)\simeq
H^n(W,W\ssm\vG_g;\C)$. Let $U_{0}=M\ssm C$, $C=\op{Coin}(f,g)$, and $U_{1}$ a \nbd\ of $C$ \st\
$\tilde f(U_{1})\subset W_{1}$.  
Let $R_{1}$ and $R_{01}$ be as in Theorem \ref{alexincdr}.
Then from Theorem \ref{alexincdr}  and (\ref{coinlocthom}), we have\,:

\begin{theorem}\label{classloccdr} The class $\vL(f,g;C)$ in $H_{m-n}(C,\C)$ is represented by the cycle
\[
\sum_{\bm{s}}c_{\bm{s}}\bm{s},\qquad c_{\bm{s}}=\int_{\bm{s}^{*}\cap R_{1}}\tilde f^{*}\psi_{1}+\int_{\bm{s}^{*}\cap R_{01}}\tilde f^{*}\psi_{01},
\]
where $\bm{s}$ runs through the $(m-n)$-simplices of $K$ in $C$.
\end{theorem}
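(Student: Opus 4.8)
The plan is to unwind the definition of $\vL(f,g;C)$ through the \v{C}ech-de~Rham model and then apply Theorem \ref{alexincdr} directly. The starting point is formula (\ref{coinlocthom}), which identifies
\[
\vL(f,g;C)=A\,\tilde f^{*}\vP_{\vG_g}\qquad\text{in}\ \ H_{m-n}(C,\C),
\]
where $A:H^{n}(M,M\ssm C;\C)\overset\sim\lra H_{m-n}(C,\C)$ is the Alexander \iso. Since $\vG_g$ has dimension $m$ in $W=M\times N$ of dimension $m+n$, the Thom class $\vP_{\vG_g}$ lives in $H^{n}(W,W\ssm\vG_g)$, so after pullback and dualization the target dimension is $m-n$, matching the statement and fixing $p=n$ for the application of Theorem \ref{alexincdr}.

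First I would set up the two coverings and verify that $\tilde f$ is a morphism between them. With $W_0=W\ssm\vG_g$, $W_1$ a \nbd\ of $\vG_g$, $U_0=M\ssm C$ and $U_1$ a \nbd\ of $C$ chosen \st\ $\tilde f(U_1)\subset W_1$, the crucial observation is that $C=\tilde f^{-1}(\vG_g)$ gives $U_0=M\ssm C=\tilde f^{-1}(W_0)$, whence $\tilde f(U_0)\subset W_0$ and $\tilde f(U_{01})\subset W_{01}$. Therefore $\tilde f$ induces a cochain map on the relative complexes,
\[
\tilde f^{*}:A^{n}(\W,W_0)\lra A^{n}(\U,U_0),\qquad (0,\psi_1,\psi_{01})\longmapsto (0,\tilde f^{*}\psi_1,\tilde f^{*}\psi_{01}),
\]
which commutes with $D$ and is compatible with the identifications $H^{*}_{D}(\W,W_0)\simeq H^{*}(W,W\ssm\vG_g;\C)$ and $H^{*}_{D}(\U,U_0)\simeq H^{*}(M,M\ssm C;\C)$. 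Consequently $\tilde f^{*}\vP_{\vG_g}$ is represented in $H^{n}_{D}(\U,U_0)$ by the relative cocycle $(0,\tilde f^{*}\psi_1,\tilde f^{*}\psi_{01})$.

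The final step is to feed this representative into Theorem \ref{alexincdr} with $S=C$ and $p=n$. That theorem computes $A$ on a class of the form $(0,\sigma_1,\sigma_{01})$ as the cycle $\sum_{\bm s}\big(\int_{\bm s^{*}\cap R_1}\sigma_1+\int_{\bm s^{*}\cap R_{01}}\sigma_{01}\big)\bm s$, summed over the $(m-n)$-simplices $\bm s$ of $K$ in $C$; substituting $\sigma_1=\tilde f^{*}\psi_1$ and $\sigma_{01}=\tilde f^{*}\psi_{01}$ produces exactly $\sum_{\bm s}c_{\bm s}\bm s$ with the stated coefficients, proving the theorem. Here $R_1$ and $R_{01}=-\partial R_1$ are the objects in $M$ from Theorem \ref{alexincdr}, so all integrals take place over chains in the source manifold.

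The main obstacle I anticipate is justifying that the componentwise pullback $\tilde f^{*}$ on \v{C}ech-de~Rham cochains really represents the topological relative pullback $\tilde f^{*}:H^{n}(W,W\ssm\vG_g;\C)\ra H^{n}(M,M\ssm C;\C)$ appearing in (\ref{coinlocthom}); this amounts to naturality of the isomorphism (\ref{dRCdR}), and of its relative version, under a map of covered pairs, and it rests precisely on the compatibility $\tilde f(U_i)\subset W_i$ established above. A secondary point to flag is the standing hypothesis that $C=\op{Coin}(f,g)$ is a $K_0$-subcomplex, which is what allows Theorem \ref{alexincdr} to apply and makes the sum over simplices meaningful.
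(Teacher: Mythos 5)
Your proposal is correct and follows exactly the route the paper takes: the paper's entire proof is the remark that the theorem follows from (\ref{coinlocthom}) together with Theorem \ref{alexincdr}, and you have simply filled in the intermediate details (compatibility of $\tilde f$ with the two coverings, the componentwise pullback of the relative \v{C}ech-de~Rham representative, and the substitution into the Alexander formula with $p=n$). The points you flag at the end are the right ones to worry about, and your resolution of them is the intended one.
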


If $C$ has a finite number of connected components $(C_{\l})$, we take $U_{1}$ (and $K_{0}$) so that $U_{1}=\bigcup_{\l}U_{\l}$,
where  $U_{\l}\supset O_{K'}(C_{\l})$, for each $\l$,  and $U_{\l}\cap U_{\mu}=\emptyset$, if $\l\ne\mu$.
Then the class $\vL(f,g;C_{\l})$ in $H_{m-n}(C_{\l},\C)$ is represented by the above cycle with 
$\bm{s}$ running through the $(m-n)$-simplices of $K$ in $C_{\l}$.

\paragraph{(a) The case $m=n$ and $C_\l$ is compact\,:}
In this case, the local class $\vL(f,g;C_\l)$ is  in $H_0(C_\l)=\Z$ so that it is an integer.
By Theorem \ref{classloccdr} and the above remark, it is given by
\begin{equationth}\label{loccoin}
\vL(f,g;C_\l)=\int_{R_\l}\tilde f^*\psi_1+\int_{R_{0\l}}\tilde f^*\psi_{01},
\end{equationth}
where $R_{\l}=R_{1}\cap U_{\l}$ and $R_{0\l}=-\partial R_{\l}$.

Now suppose $C_\l=\{a\}$ is an isolated point.
A short  proof of the following formula using the  Thom class in the \v Cech-de~Rham cohomology is given in \cite{BBIS}\,:

\begin{theorem}\label{propisolated} We have\,{\rm :}
\[
\vL(f,g; a)=\op{deg}(g-f,a).
\]
\end{theorem}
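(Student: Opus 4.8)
The plan is to evaluate the explicit formula (\ref{loccoin}) with a conveniently chosen \v{C}ech--de~Rham representative $(0,\psi_1,\psi_{01})$ of the Thom class $\vP_{\vG_g}$, built from a defining submersion of the graph, and then to recognize the resulting integral as a mapping degree. Since $\vL(f,g;a)$ is a well-defined integer, the value of the integral in (\ref{loccoin}) is independent of the representative used, so I am free to pick one adapted to the geometry near $a$. As $a$ is isolated, I would first pass to local coordinates $x=(x_1,\dots,x_m)$ near $a$ in $M$ and $y=(y_1,\dots,y_n)$ (with $n=m$) near $f(a)=g(a)$ in $N$, so that on the relevant part of $W=M\times N$ the graph is $\vG_g=\{(x,y)\mid y=g(x)\}=h^{-1}(0)$ for the submersion $h(x,y)=y-g(x)$ into $\R^n$. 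The point $\tilde f(a)=(a,g(a))$ lies on $\vG_g$, and the crucial identity is $h\circ\tilde f=f-g$, because $\tilde f(x)=(x,f(x))$.

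Next I would produce a Thom form. Let $\theta$ be the normalized angular (solid-angle) form on $\R^n\ssm\{0\}$, with $\int_{S^{n-1}}\theta=1$, and let $\rho$ be a cutoff equal to $1$ near $0$ and to $0$ outside a ball. Then the triple $(0,\psi_1,\psi_{01})$ with $\psi_1=s\,h^*(d\rho\wedge\theta)$ on $W_1$ and $\psi_{01}=s\,(\rho\circ h)\,h^*\theta$ on $W_{01}$ is a $D$-cocycle in $A^n(\W,W_0)$ near $\tilde f(a)$, and for the correct sign $s=\pm1$ it represents $\vP_{\vG_g}$ via (\ref{thomalex}) and Theorem~\ref{alexincdr}. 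Pulling back by $\tilde f$ and using $h\circ\tilde f=f-g$ replaces everything by the difference map $\phi:=f-g$, giving $\tilde f^*\psi_1=s\,\phi^*(d\rho\wedge\theta)=s\,d(\phi^*(\rho\theta))$ and $\tilde f^*\psi_{01}=s\,(\rho\circ\phi)\,\phi^*\theta$ away from $a$.

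Then I would compute by Stokes' theorem on $R_\l\ssm B_\e(a)$ and let $\e\to 0$. Using $R_{0\l}=-\partial R_\l$, the boundary contribution of $\int_{R_\l}\tilde f^*\psi_1$ over $\partial R_\l$ cancels exactly against $\int_{R_{0\l}}\tilde f^*\psi_{01}$, and one is left with the inner-sphere term $-s\lim_{\e\to 0}\int_{\partial B_\e}(\rho\circ\phi)\,\phi^*\theta$. Since $\rho\circ\phi\equiv 1$ on $\partial B_\e$ for small $\e$, this limit equals $\int_{\partial B_\e}\phi^*\theta=\deg(\phi/\|\phi\|)=\deg(f-g,a)$, the local mapping degree of the difference map. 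Hence (\ref{loccoin}) reduces to $\vL(f,g;a)=\mp\deg(f-g,a)$ up to the normalization sign.

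The main obstacle is bookkeeping of signs, not analysis. Three signs must be reconciled: the normalization sign of $\rho$ and $\theta$, pinned by requiring the fibre integral $\int_{\R^n}(\pm d\rho\wedge\theta)=+1$; the antipodal sign $\deg(f-g,a)=(-1)^m\deg(g-f,a)$ relating the difference maps $f-g$ and $g-f$; and the discrepancy $(-1)^{mn}$ between the coorientation of $\vG_g$ induced by $h=y-g$ and the one dictated by the orientation conventions of Section~\ref{secbasics} (orientation of $\vG_g$ via $\tilde g$, product orientation of $W$ with $M$ first). With $m=n$ these combine to $(-1)^{m(m+1)}=1$, so the three sign corrections cancel and the value is precisely $\deg(g-f,a)$. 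I would therefore spend most of the effort verifying that $(0,\psi_1,\psi_{01})$ genuinely represents $\vP_{\vG_g}$ with the coorientation-correct sign, since the degree identification in the Stokes step is then standard.
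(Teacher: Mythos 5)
Your argument is correct and is essentially the proof the paper points to: the paper itself gives no argument for Theorem \ref{propisolated}, deferring to \cite{BBIS}, where precisely this method --- an explicit angular-form representative of the Thom class $\vP_{\vG_g}$ in relative \v{C}ech--de~Rham cohomology, pulled back by $\tilde f$ using $h\circ\tilde f=f-g$ and evaluated through (\ref{loccoin}) and Stokes' theorem --- is used. The only points you defer, namely the sign normalization of the cocycle and the naturality argument justifying the use of a representative constructed only near $\tilde f(a)$ rather than on all of $W_1$, are standard and do not affect correctness.
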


From Theorems \ref{thlefgen}
and \ref{propisolated}, we recover Theorem \ref{lefcpf}.

\paragraph{(b) The case $C_\l$ is a pseudo-\mfd\,:}

\begin{definition}\label{defpmfd} A {\em pseudo-\mfd} $X$ of dimension $d$ in $M$ is a subcomplex of $M$ \wrt\
a triangulation of $M$ satisfying the following conditions\,:
\begin{enumerate}
\item[(1)] Every simplex in $X$ is a face of some $d$-simplex in $X$,
\item[(2)] Every $(d-1)$-simplex is the face of exactly two $d$-simplices,
\item[(3)] The $d$-simplices in $X$ can be oriented so that, if $\bm{s}$ is a $(d-1)$-simplex in $X$
and if $\bm{s}_1$ and $\bm{s}_2$ are the two simplices that contain $\bm{s}$ in their boundary, then 
the prescribed orientations of $\bm{s}_1$ and $\bm{s}_2$ induce opposite orientations of $\bm{s}$.
\end{enumerate}
\end{definition}

A pseudo-\mfd\ $X$ is said to be {\em oriented}, once   orientations of $d$-simplices in $X$ satisfying (3) above
are fixed. We say that $X$ is {\em \irr} if $X\ssm X^{d-2}$ is connected, where $X^{d-2}$ denotes the $(d-2)$-skeleton
of $X$. Then we have a decomposition into \irr\ components\,:
\[
X=\bigcup_iX_i.
\]
If $X$ is oriented, each $X_i$ carries a fundamental cycle, the union of $d$-simplices in $X_i$ and it defines
a class in $H_d(X)$. In fact $H_d(X)$ is generated by these classes.
\vv

Let $C_\l$ be a connected component of $C=\op{Coin}(f,g)$ as above and suppose it is an oriented 
pseudo-\mfd\ of dimension $m-n$. If $\bm{s}$ is an $(m-n)$-simplex in $C_\l$, ${\bm{s}^*}$ is an $n$-cell
\st\ ${\bm{s}^*}\cap C_\l=\{b_{\bm{s}}\}$, Thus $\deg (g-f)|_{\bm{s}^*}$ makes sense.

\begin{theorem}\label{pm} In the above situation, the local  class $\vL(f,g;C_\l)$ in $H_{m-n}(C_\l)$  is represented
 by the cycle
\[
\sum_{\bm{s}} \deg (g-f)|_{\bm{s}^*}\cdot \bm{s},
\]
where $\bm{s}$ runs through the $(m-n)$-simplices of $K$ in $C_\l$.
\end{theorem}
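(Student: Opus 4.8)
The plan is to start from the explicit \v{C}ech--de~Rham representative furnished by Theorem~\ref{classloccdr} and to identify, for each simplex $\bm{s}$, the coefficient $c_{\bm{s}}$ with the local degree $\op{deg}(g-f)|_{\bm{s}^*}$. By Theorem~\ref{classloccdr}, the class $\vL(f,g;C_\l)$ is represented by the cycle $\sum_{\bm{s}}c_{\bm{s}}\bm{s}$, where $\bm{s}$ runs over the $(m-n)$-simplices of $K$ in $C_\l$ and
\[
c_{\bm{s}}=\int_{\bm{s}^*\cap R_1}\tilde f^*\psi_1+\int_{\bm{s}^*\cap R_{01}}\tilde f^*\psi_{01},
\]
with $(0,\psi_1,\psi_{01})$ a representative of the Thom class $\vP_{\vG_g}$. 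Since $\sum_{\bm{s}}c_{\bm{s}}\bm{s}$ is already asserted there to be a representing cycle, it suffices to show $c_{\bm{s}}=\op{deg}(g-f)|_{\bm{s}^*}$ for each $\bm{s}$; the cycle condition then requires no separate verification.

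First I would exploit the fact that $\bm{s}^*$ is an $n$-cell meeting $C_\l$ only at its barycenter $b_{\bm{s}}$, so that $\tilde f|_{\bm{s}^*}$ meets $\vG_g$ only at the single point $\tilde f(b_{\bm{s}})$. The key observation is that the integrals defining $c_{\bm{s}}$ involve $\tilde f$ only through its restriction to a neighborhood of $\bm{s}^*$, and that the domains of integration, $\bm{s}^*\cap R_1$ and $\bm{s}^*\cap R_{01}=\bm{s}^*\cap(-\partial R_1)$, are respectively an $n$-disk around $b_{\bm{s}}$ and minus its boundary. Hence $c_{\bm{s}}$ has exactly the shape of the expression~(\ref{loccoin}) of case~(a), with the ambient $n$-manifold $M$ there replaced by the slice $\bm{s}^*$ and with the isolated coincidence point $b_{\bm{s}}$.

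With this reduction in hand, I would run the very computation underlying Theorem~\ref{propisolated} on the slice $\bm{s}^*$. Concretely, a point $(x,f(x))$ of $\tilde f(\bm{s}^*)$ lies on $\vG_g$ precisely when $f(x)=g(x)$, and in a tubular model of $\vG_g$ in $W=M\times N$ the normal displacement of $\tilde f(x)$ is measured by $g(x)-f(x)$ in the $N$-factor. Thus the transverse map controlling the local index along $\bm{s}^*$ is $(g-f)|_{\bm{s}^*}$, whose zero at $b_{\bm{s}}$ is isolated, and the same argument that proves Theorem~\ref{propisolated} gives $c_{\bm{s}}=\op{deg}(g-f)|_{\bm{s}^*}$. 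Summing over $\bm{s}$ yields the asserted representative, and since the degrees are integers the cycle is automatically integral, consistent with the statement.

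The main obstacle will be the careful bookkeeping of orientations. I must check that the orientation convention for $\bm{s}^*$ fixed in Section~\ref{secbasics} (the orientation of $\bm{s}^*$ followed by that of $\bm{s}$ gives that of $M$), the chosen orientation of $\vG_g$, and the pseudo-manifold orientation of $C_\l$ are mutually compatible, so that the local degree enters with a plus sign and no spurious factor such as $(-1)^{n(m-n)}$ survives. Verifying that the tubular model of $\vG_g$ restricts to the expected transverse map on the slice $\bm{s}^*$, and that this matches the data entering Theorem~\ref{propisolated}, is the step demanding genuine care; the integral manipulations themselves become routine once the isolated-point computation is invoked.
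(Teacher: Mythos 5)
Your proposal is correct and follows essentially the same route as the paper, whose proof is exactly the combination of (\ref{coinlocthom}), Theorem \ref{classloccdr} and Theorem \ref{propisolated}: the explicit \v{C}ech--de~Rham coefficients $c_{\bm{s}}$ are identified with the local degrees by restricting to the transverse slice $\bm{s}^*$ and invoking the isolated-point computation. Your write-up merely makes explicit the slice argument and the orientation bookkeeping that the paper leaves implicit.
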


\begin{proof} This follows from (\ref{coinlocthom}) and Theorems \ref{classloccdr},  \ref{propisolated}.
\end{proof}

\begin{corollary}\label{corloccoin} In the above situation, if $C_\l=\bigcup_iC_{\l,i}$ is the \irr\ decomposition,
\[
\vL(f,g;C_\l)= \sum_i\deg (g_{\l,i}-f_{\l,i})\cdot C_{\l,i}\qquad\text{in}\ \ H_{m-n}(C_\l),
\]
where $g_{\l,i}-f_{\l,i}$ is the restriction of $g-f$ to a small ball of dimension $n$ transverse to $C_{\l,i}$ at a
non-singular point of $C_{\l,i}$.
\end{corollary}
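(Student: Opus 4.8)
The plan is to deduce the corollary directly from Theorem~\ref{pm} by identifying the coefficients $\deg(g-f)|_{\bm{s}^{*}}$ attached to the simplices and then grouping the resulting cycle along the irreducible components $C_{\l,i}$. First I would invoke Theorem~\ref{pm}, which tells us that $\vL(f,g;C_\l)$ is represented by the cycle $\sum_{\bm{s}}\deg(g-f)|_{\bm{s}^{*}}\cdot\bm{s}$, the sum running over the $(m-n)$-simplices of $K$ in $C_\l$. The whole content of the corollary is therefore the computation of these degree coefficients together with the observation that they are constant on each irreducible component.

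The key step is to show that for an $(m-n)$-simplex $\bm{s}$ lying in the irreducible component $C_{\l,i}$, the local degree $\deg(g-f)|_{\bm{s}^{*}}$ equals a single number $\deg(g_{\l,i}-f_{\l,i})$ that does not depend on the particular simplex $\bm{s}$. For this I would argue as follows. The dual cell $\bm{s}^{*}$ is an $n$-cell meeting $C_\l$ only at the barycenter $b_{\bm{s}}$, so it plays the role of a small $n$-ball transverse to $C_{\l,i}$ at the interior point $b_{\bm{s}}$, and $\deg(g-f)|_{\bm{s}^{*}}$ is precisely the local degree of the transverse map $g-f$ at that point. By definition $C_{\l,i}\ssm C_{\l,i}^{d-2}$ is connected (with $d=m-n$), so any two barycenters $b_{\bm{s}}$ of top-dimensional simplices in $C_{\l,i}$ can be joined by a path through non-singular points; the transverse local degree varies continuously along such a path and, being integer-valued, must be constant. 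Hence the common value is well-defined and equals the number $\deg(g_{\l,i}-f_{\l,i})$ as described in the statement, computed at any one non-singular point of $C_{\l,i}$.

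With the coefficients identified, the final step is purely combinatorial: I would split the index set of $(m-n)$-simplices according to the irreducible decomposition $C_\l=\bigcup_iC_{\l,i}$ and factor out the constant coefficient on each piece,
\[
\sum_{\bm{s}}\deg(g-f)|_{\bm{s}^{*}}\cdot\bm{s}=\sum_i\deg(g_{\l,i}-f_{\l,i})\sum_{\bm{s}\subset C_{\l,i}}\bm{s}.
\]
The inner sum $\sum_{\bm{s}\subset C_{\l,i}}\bm{s}$ is exactly the fundamental cycle of the oriented irreducible pseudo-manifold $C_{\l,i}$, whose class in $H_{m-n}(C_\l)$ is denoted $C_{\l,i}$ in the discussion preceding Definition~\ref{defpmfd}. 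Substituting gives $\vL(f,g;C_\l)=\sum_i\deg(g_{\l,i}-f_{\l,i})\cdot C_{\l,i}$, as claimed.

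The main obstacle is the constancy claim in the second step. Care is needed at the $(d-1)$-simplices shared by two top-dimensional simplices: one must check that the orientation convention (3) in Definition~\ref{defpmfd} is exactly what makes the degree agree across the shared face, so that $\sum_{\bm{s}\subset C_{\l,i}}\bm{s}$ is genuinely a cycle and the coefficient is unambiguous. Passing through the $(d-2)$-skeleton is avoided precisely because irreducibility guarantees a path in the non-singular locus, but one should confirm that transversality of $\bm{s}^{*}$ persists and that the deformation keeps $g-f$ nonvanishing on the transverse sphere, so that the degree is defined and stable all along the path.
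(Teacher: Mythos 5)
Your proposal is correct and follows the same route the paper intends: the corollary is stated as an immediate consequence of Theorem~\ref{pm}, obtained by observing that the coefficient $\deg(g-f)|_{\bm{s}^{*}}$ is constant on each irreducible component $C_{\l,i}$ (the non-singular locus being connected) and that grouping the simplices accordingly yields the fundamental cycles of the $C_{\l,i}$. Your extra care about the orientation convention (3) and about the degree being defined and stable along a path in the non-singular locus is exactly the right point to check, and it goes through since distinct irreducible components meet only in the $(m-n-2)$-skeleton.
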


In the case $C_\l$ is a \mfd, the above reduces to  Proposition 4.8 in \cite{BBIS}.
Note that we do not need the compactness of $M$ or $C_\l$.

\section{Coincidence of several maps}

\subsection{Global cohomology class}

Let $X$ be a topological space and  $N$  a compact connected oriented $n$-dimensional manifold.
In \cite{BLM} the authors consider $p$ maps $f_1, \dots, f_p : X \to N$, $p\ge 2$, and define a Lefschetz class 
$L(f_1, \ldots, f_p) \in H^{(p-1)n}(X,\Q)$. They  prove that $ L(f_1, \ldots, f_p)\ne 0$  implies that one has ``multi-coincidence"
$f_1(x) = \cdots = f_p(x) $ for some $x \in X$. 

We recall the definition of the  class 
$L(f_1, \ldots, f_p)$ and give explicit local contributions subsequently. Again, it is not necessary to consider the Thom class of $\vD_N$ or to assume the compactness of
$N$. Also, it is defined in $H^{(p-1)n}(X,\Z)$.

\begin{definition} The {\em Lefschetz coincidence cohomology class} of $f_1, \dots, f_p$ is defined as 
\[
L(f_1, \ldots, f_p) = L(f_1,f_2)\smallsmile\cdots\smallsmile L(f_{p-1},f_p)\qquad\text{in}\ \  H^{(p-1)n} (X).
\]
\end{definition}

\subsection{ Global homology classes}

Suppose $X=M$ is an oriented \mfd\ of dimension $m$.

\begin{definition}
We define the {\em global coincidence homology} class as
\[
\vL(f_1,\ldots ,f_p) =\vL(f_1,f_2)\Cdot\cdots\Cdot \vL(f_{p-1},f_p)\qquad\text{in}\ \ H_{m-(p-1)n}(M).
\]
\end{definition}

From Theorem \ref{thcoincoin}, we have\,:

\begin{theorem} We have the equality\,{\rm :}
\[
\vL(f_1,\ldots ,f_p) = P'_ML(f_1, \ldots, f_p).
\]
\end{theorem}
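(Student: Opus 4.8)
The plan is to reduce everything to the two-map case, which is Theorem \ref{thcoincoin}, by exploiting the compatibility of the iterated intersection product with the iterated cup product under Poincar\'e duality. The starting observation is that the intersection product (\ref{defint}) may be computed with $P'_M$ in place of $P_M$ (as noted in the remark that the intersection product is unchanged under this substitution), so that for homology classes $a$ and $b$ one has $a\Cdot b=P'_M((P'_M)^{-1}a\smallsmile (P'_M)^{-1}b)$, equivalently $(P'_M)^{-1}(a\Cdot b)=(P'_M)^{-1}a\smallsmile (P'_M)^{-1}b$. Thus $(P'_M)^{-1}$ intertwines the intersection product with the cup product.

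First I would promote this to iterated products. Since the cup product is associative and $(P'_M)^{-1}$ is an isomorphism, a straightforward induction on the number of factors gives
\[
(P'_M)^{-1}(a_1\Cdot a_2\Cdot\cdots\Cdot a_{p-1})=(P'_M)^{-1}a_1\smallsmile (P'_M)^{-1}a_2\smallsmile\cdots\smallsmile (P'_M)^{-1}a_{p-1}
\]
for any classes $a_i$ in $H_{m-n}(M,\Q)$ for which the products are defined.

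Next I would apply this to the particular classes $a_i=\vL(f_i,f_{i+1})$. By Theorem \ref{thcoincoin}, applied to each consecutive pair $(f_i,f_{i+1})$, we have $(P'_M)^{-1}\vL(f_i,f_{i+1})=L(f_i,f_{i+1})$. Substituting into the displayed identity and invoking the definitions of $\vL(f_1,\ldots,f_p)$ and $L(f_1,\ldots,f_p)$ as the iterated intersection and cup products, respectively, we obtain
\[
(P'_M)^{-1}\vL(f_1,\ldots,f_p)=L(f_1,f_2)\smallsmile\cdots\smallsmile L(f_{p-1},f_p)=L(f_1,\ldots,f_p),
\]
and applying $P'_M$ yields the asserted equality.

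The argument is essentially formal once Theorem \ref{thcoincoin} is in hand, so I do not expect a serious obstacle. The only points demanding care are bookkeeping ones: verifying that the $P'$ convention (rather than $P$) is the correct one to make the pairwise identity sign-free and hence to keep the induction clean, and checking that the degrees match so that each successive intersection product lands in the expected group, ending in $H_{m-(p-1)n}(M)$ as stated. Since Theorem \ref{thcoincoin} is phrased precisely with $P'_M$ and with no sign, and the intersection product is insensitive to the choice between $P_M$ and $P'_M$, these verifications are routine; one should also record that the compactness hypotheses on $M$ and $N$ needed to invoke Theorem \ref{thcoincoin} are already in force in this setting.
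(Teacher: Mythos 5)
Your argument is correct and is precisely the route the paper intends: the paper derives this theorem directly from Theorem \ref{thcoincoin} with no further written proof, and your filling-in — that $(P'_M)^{-1}$ carries the iterated intersection product to the iterated cup product (using the remark that the intersection product is unchanged under replacing $P_M$ by $P'_M$), combined with the pairwise identity $(P'_M)^{-1}\vL(f_i,f_{i+1})=L(f_i,f_{i+1})$ — is exactly the intended computation. No gaps.
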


\subsection{Local homology classes}

Let us consider the case of $p$ maps $f_1, \dots, f_p : M \to N$. We set $W=M\times N$.
We denote by $C_{i,i+1}$  the coincidence set
\[
C_{i,i+1} = \op{Coin} (f_i,f_{i+1}) \qquad \text{ for } i=1, \cdots, p-1.
\]
Then we have 
\[
\bigcap_{i=1}^{p-1}C_{i,i+1} = \op{Coin} (f_1,\cdots,f_p),
\]
which we denote by $C$.

For each $i$, $1\le i \le p-1$, we have a commutative diagram:

\[
\SelectTips{cm}{}
\xymatrix 
{H^n(W, W\ssm \vG_{f_{i+1}}) \ar[r]^-{\sim}_-{A} \ar[d]^{{\tilde f}^*_i}
& H_m( \vG_{f_{i+1}}) \ar[d]^-{(M\cdot {}_{\tilde f_i}\;)_{C_{i,i+1}}}\\
H^n(M, M \ssm C_{i,i+1}) \ar[r]^-{\sim}_-{A} &H_{m-n}(C_{i,i+1})}
\]
and the class
\[
\vL(f_i,f_{i+1};C_{i,i+1})=(M\mCdot_{\tilde f_i}\hspace{.3mm}\vG_{f_{i+1}})_{C_{i,i+1}}\qquad\text{in}\ \ 
H_{m-n}(C_{i,i+1}).
\]

\begin{definition} We  define the {\em local coincidence class} as
\[
\vL(f_1,\ldots ,f_p ; C) = (\vL(f_1,f_2;C_{1,2})\Cdot\cdots\Cdot \vL(f_{p-1},f_p;C_{p-1,p}))_C\qquad
\text{in}\ \ H_{m-(p-1)n}(C).
\]
\end{definition}

Suppose $C = \op{Coin}(f_1,\dots,f_p)$ has a finite number of connected components $(C_\l)_\l$. Then we have 
 the local coincidence class 
$\vL(f_1,\dots,f_p;C_\l)$  in $H_{m-n}(C_\l)$ and we have a general coincidence point theorem
as in Section \ref{sectwomapsdd}\,:

\begin {theorem}\label{thcoinmult}
In the above situation, we have
\[
\vL(f_1,\dots,f_p) = \sum_\l (\iota_\l)_* \vL(f_1,\dots,f_p;C_\l) \qquad \text{ in}\ \  H_{m-(p-1)n}(M),
\]
where $\iota_\l:C_\l\hra M$ denotes the inclusion.
\end{theorem}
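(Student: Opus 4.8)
The plan is to reduce the assertion to the two-map coincidence point theorem (Theorem \ref{thlefgen}) together with the localization property of the intersection product recorded in Section \ref{secbasics}. First I would apply Theorem \ref{thlefgen} to each of the $p-1$ consecutive pairs $(f_i,f_{i+1})$. Grouping all the connected components of $C_{i,i+1}$ into the single class $a_i=\vL(f_i,f_{i+1};C_{i,i+1})\in H_{m-n}(C_{i,i+1})$, this gives
\[
\vL(f_i,f_{i+1})=(\iota_i)_*\,a_i\qquad\text{in}\ \ H_{m-n}(M),
\]
where $\iota_i:C_{i,i+1}\hra M$ is the inclusion. Hence, by the definition of the global coincidence homology class, every factor of
\[
\vL(f_1,\dots,f_p)=\vL(f_1,f_2)\Cdot\cdots\Cdot\vL(f_{p-1},f_p)
\]
is the image, under the inclusion-induced map, of a class supported on the corresponding pairwise coincidence set.

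The second step is to push this localization through the iterated intersection product. Concretely, I want to establish
\[
(\iota_1)_*a_1\Cdot\cdots\Cdot(\iota_{p-1})_*a_{p-1}=\sum_\l(\iota_\l)_*\,(a_1\Cdot\cdots\Cdot a_{p-1})_{C_\l},
\]
the localized product on the right living in $H_{m-(p-1)n}(C_\l)$ and the sum running over the connected components $C_\l$ of $C=\bigcap_i C_{i,i+1}$. For two factors this is exactly the localization formula underlying (\ref{localizintno}), in its non-compact form where the support is written as a disjoint union of components. For more than two factors I would argue by induction, intersecting one factor at a time: the support of each partial product is contained in the intersection of the supports of the factors, so after incorporating all $p-1$ factors the support is $C$ and the contributions are indexed by its components $C_\l$. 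The associativity of the intersection product---which holds because it is built from the associative cup product through the compatible Poincar\'e and Alexander isomorphisms of (\ref{PoinAlex})---guarantees that the resulting localized class is independent of the order of intersection and equals $(a_1\Cdot\cdots\Cdot a_{p-1})_{C_\l}$, which is by definition $\vL(f_1,\dots,f_p;C_\l)$. Combining the two steps yields the claimed formula. The dimension count is automatic: each $a_i$ lies in $H_{m-n}$, so the $(p-1)$-fold intersection product in the $m$-dimensional ambient manifold lands in $H_{m-(p-1)n}$.

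The main obstacle I anticipate is the bookkeeping in this inductive localization, namely verifying that the two-factor localization formula iterates compatibly---equivalently, that the localized intersection product is associative and that localizing the global product at the full intersection $C$ agrees with successively localizing at the partial intersections. This reduces to the commutativity of an enlarged version of the square (\ref{PoinAlex}) relating the Alexander isomorphisms for the nested intersections $C_{1,2}$, $C_{1,2}\cap C_{2,3}$, $\dots$, $C$, together with the associativity of the relative cup products feeding into those dualities. Once this compatibility is secured, the identification of each localized contribution with the local coincidence class $\vL(f_1,\dots,f_p;C_\l)$ is immediate from its definition.
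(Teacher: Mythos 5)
Your proposal is correct and follows essentially the route the paper intends: the paper offers no separate argument for Theorem \ref{thcoinmult} beyond the remark that it holds ``as in Section \ref{sectwomapsdd}'', i.e., it reduces to Theorem \ref{thlefgen} for each pair $(f_i,f_{i+1})$ together with the localization formula (\ref{localizintno}) iterated over the $p-1$ factors, which is exactly what you spell out. The compatibility issue you flag (associativity of the localized product via the relative cup product and the nested Alexander isomorphisms of (\ref{PoinAlex})) is indeed the only point needing care, and your resolution of it is the right one.
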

 
 From Corollary \ref{corloccoin}, we have an explicit expression of the local coincidence class.
 For simplicity we assume that each $C_{i,i+1}$ is an \irr\ oriented  pseudo-\mfd\ of dimension $m-n$.
 Let $B_{i,i+1}$ be a small ball of dimension $n$ in $M$ transverse to $C_{i,i+1}$ at a
non-singular point of $C_{i,i+1}$. Then we have

\begin{theorem}\label{locmult} In the above situation, we have the following formula in $H_{m-(p-1)n}(C)$\,{\rm :}
\[
\vL(f_1,\dots,f_p;C)=\deg ((f_2-f_1)|_{B_{1,2}})\cdots\deg ((f_p-f_{p-1})|_{B_{p-1,p}})\cdot 
(C_{1,2}\Cdot\cdots\Cdot C_{p-1,p})_C.
\]
\end{theorem}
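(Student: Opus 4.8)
The plan is to reduce Theorem~\ref{locmult} entirely to the two-map results already established, treating the iterated localized intersection product one factor at a time. First I would observe that by definition
\[
\vL(f_1,\dots,f_p;C)=(\vL(f_1,f_2;C_{1,2})\Cdot\cdots\Cdot\vL(f_{p-1},f_p;C_{p-1,p}))_C,
\]
so the strategy is to substitute the explicit expression for each factor coming from Corollary~\ref{corloccoin} and then move all the integer coefficients outside the localized intersection product, using its bilinearity. Since each $C_{i,i+1}$ is assumed to be an \irr\ oriented pseudo-\mfd\ of dimension $m-n$, Corollary~\ref{corloccoin} applies with a single \irr\ component and gives
\[
\vL(f_i,f_{i+1};C_{i,i+1})=\deg((f_{i+1}-f_i)|_{B_{i,i+1}})\cdot C_{i,i+1}\qquad\text{in}\ \ H_{m-n}(C_{i,i+1}),
\]
where $B_{i,i+1}$ is the transverse $n$-ball at a non-singular point. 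This is the key input that converts each homology-class factor into an integer multiple of the fundamental cycle $C_{i,i+1}$.

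Next I would feed these expressions into the localized intersection product. The crucial formal property is that the localized product $(\ \Cdot\ )_C$ is additive, hence $\Z$-linear, in each of its arguments; this is stated just after the definition~(\ref{locint}) (``$(a\Cdot b)_S$ is additive in $a$ and $b$''), and the same holds for the iterated product built from the two-variable one. Therefore each scalar $\deg((f_{i+1}-f_i)|_{B_{i,i+1}})$ pulls out of the product, leaving
\[
\vL(f_1,\dots,f_p;C)=\prod_{i=1}^{p-1}\deg((f_{i+1}-f_i)|_{B_{i,i+1}})\cdot(C_{1,2}\Cdot\cdots\Cdot C_{p-1,p})_C,
\]
which is exactly the asserted formula. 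Formally this is an induction on $p$: the base case $p=2$ is Corollary~\ref{corloccoin} itself, and the inductive step peels off the last factor $\vL(f_{p-1},f_p;C_{p-1,p})=\deg((f_p-f_{p-1})|_{B_{p-1,p}})\cdot C_{p-1,p}$, applies linearity to extract its coefficient, and invokes the induction hypothesis on the remaining product of the first $p-1$ maps.

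The main obstacle I anticipate is purely bookkeeping rather than conceptual: one must be sure that the localized intersection product of the several classes is genuinely multilinear over $\Z$ when assembled from the iterated two-variable products (\ref{locint}), and that associativity of these products lets the scalars be extracted independently at each stage without introducing sign ambiguities. Because the coefficients here are integers (mapping degrees), they are central and commute past everything, so no sign subtleties of the type in~(\ref{anticomm}) intervene when pulling them out; the only signs that could appear are absorbed into the definition of the fundamental cycles $C_{i,i+1}$ and hence into their product $(C_{1,2}\Cdot\cdots\Cdot C_{p-1,p})_C$, which is precisely what the right-hand side carries. Once linearity and associativity of the localized product are granted, the proof is a direct substitution followed by factoring out the degrees.
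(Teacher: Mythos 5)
Your proposal is correct and takes essentially the same route as the paper, which justifies the theorem only by the phrase ``From Corollary \ref{corloccoin}, we have an explicit expression of the local coincidence class'': one substitutes $\vL(f_i,f_{i+1};C_{i,i+1})=\deg((f_{i+1}-f_i)|_{B_{i,i+1}})\cdot C_{i,i+1}$ (valid since each $C_{i,i+1}$ is assumed irreducible, so the sum in the corollary has a single term) into the definition of $\vL(f_1,\dots,f_p;C)$ and uses additivity of the localized intersection product to extract the integer degrees. Your explicit attention to multilinearity and the induction on $p$ simply fills in the bookkeeping the paper leaves implicit.
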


If $m=(p-1)n$ and if the $C_{i,i+1}$'s intersect transversely, $C$ consists of isolated points and we have\,:
\begin{corollary}\label{corloccoingen} In the above situation, for a point $x$ in $C$,
\[
\vL(f_1,\dots,f_p;x)= \deg ((f_2-f_1)|_{B_{1,2}})\cdots\deg ((f_p-f_{p-1})|_{B_{p-1,p}}).
\]
\end{corollary}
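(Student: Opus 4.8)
The plan is to read the corollary off from Theorem \ref{locmult} by specializing to the dimension $m=(p-1)n$ and then evaluating the one surviving localized intersection product at a single isolated coincidence point.

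First I would carry out the dimension count. Each $C_{i,i+1}$ is a pseudo-\mfd\ of dimension $m-n$, hence of codimension $n$ in $M$, and there are $p-1$ of them, so their total codimension is $(p-1)n$. Under the hypothesis $m=(p-1)n$ this equals $m$, so the expected dimension of $C=\bigcap_i C_{i,i+1}$ is $m-(p-1)n=0$; transversality of the $C_{i,i+1}$ then forces $C$ to be a finite set of isolated points. Fixing such a point $x$, the local class $\vL(f_1,\dots,f_p;x)$ lies in $H_0(\{x\})=\Z$ and is therefore an integer, as is the localized product $(C_{1,2}\Cdot\cdots\Cdot C_{p-1,p})_x$.

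Next I would apply Theorem \ref{locmult} with the connected component taken to be the single point $x$. It yields
\[
\vL(f_1,\dots,f_p;x)=\deg\bigl((f_2-f_1)|_{B_{1,2}}\bigr)\cdots\deg\bigl((f_p-f_{p-1})|_{B_{p-1,p}}\bigr)\cdot(C_{1,2}\Cdot\cdots\Cdot C_{p-1,p})_x,
\]
so the whole corollary reduces to the single numerical identity $(C_{1,2}\Cdot\cdots\Cdot C_{p-1,p})_x=1$.

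The heart of the argument, and the step I expect to be the main obstacle, is precisely this identity: that the localized intersection number of the transversely meeting oriented pseudo-\mfd s $C_{1,2},\dots,C_{p-1,p}$, whose total codimension equals $\dim M$, is $1$ at each isolated intersection point. Since $x$ is a non-singular point of every $C_{i,i+1}$, in a \nbd\ of $x$ each coincidence set is an honest oriented sub\mfd, and transversality means $T_xM=\bigoplus_i T_xC_{i,i+1}$. Unwinding the definition (\ref{locint}) of the localized product, the cup product of the Alexander-dual (Thom) classes of the $C_{i,i+1}$ is the Thom class of the point $x$, whose Alexander dual is $\pm$ the point class. The remaining work is purely orientation bookkeeping: using the convention of Section \ref{secbasics}, that the orientation of a dual cell $\bm{s}^*$ followed by that of $\bm{s}$ gives the orientation of $M$, together with the fixed orientations of the $C_{i,i+1}$, one verifies that the splitting $T_xM=\bigoplus_i T_xC_{i,i+1}$ is orientation-coherent, so the sign is $+1$ rather than $-1$. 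With this normalization the product of degrees is exactly $\vL(f_1,\dots,f_p;x)$, as claimed.
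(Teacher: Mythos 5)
Your proposal is correct and follows the same route the paper takes implicitly: the corollary is stated with no separate proof, being read off from Theorem \ref{locmult} by specializing to $m=(p-1)n$ and using that the localized intersection number $(C_{1,2}\Cdot\cdots\Cdot C_{p-1,p})_x$ of the transversely meeting oriented pseudo-manifolds of complementary total codimension equals $1$ at the isolated point $x$ (with the orientation conventions of Section \ref{secbasics}). Your identification of that evaluation as the one substantive step, and its reduction to the fact that the cup product of the Thom classes is the Thom class of the point together with orientation bookkeeping, is precisely the content the paper leaves implicit.
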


\bigskip

J.-P. Brasselet

IML, CNRS, Campus de Luminy, Case 907

13288 Marseille Cedex 9, France

jean-paul.brasselet@univ-amu.fr

\bigskip

T. Suwa

Department of Mathematics

Hokkaido University

Sapporo 060-0810, Japan

tsuwa@sci.hokudai.ac.jp

\end{document}